\LetLtxMacro\todonotestodo\todo
\renewcommand{\todo}[2][]{\todonotestodo[#1]{TODO: {#2}}}
\bfseries\color{blue},  
\itshape\color{gray},  
\theoremstyle{definition}
\newtheorem{lemma}{Lemma}[section]
\newtheorem{theorem}[lemma]{Theorem}
\newtheorem{corollary}[lemma]{Corollary}
\newtheorem{fact}[lemma]{Fact}
\newtheorem{remark}[lemma]{Remark}
\newtheorem{example}[lemma]{Example}
\newtheorem{definition}[lemma]{Definition}
\newtheorem*{claim*}{Claim}
\newtheorem*{theorem*}{Theorem}
\newtheorem*{corollary*}{Corollary}
\newtheorem*{lemma*}{Lemma}
\newtheorem*{remark*}{Remark}
\newtheorem*{question*}{Question}
\newcommand{\A}{\mathbf{A}}
\newcommand{\R}{\mathbb{R}}
\newcommand{\C}{\mathbb{C}}
\renewcommand{\P}{\mathbb{P}}
\newcommand\ddfrac[2]{\frac{\displaystyle #1}{\displaystyle #2}}
\DeclareMathOperator{\codim}{codim}
\DeclareMathOperator{\Spec}{Spec}
\DeclareMathOperator{\rlct}{rlct}
\DeclareMathOperator{\lct}{lct}
\DeclareMathOperator{\exc}{exc}
\DeclareMathOperator{\An}{An}
\renewcommand{\div}{\text{div}}
\DeclareMathOperator{\Sing}{Sing}
\DeclareMathOperator{\Jac}{Jac}
\title[Real hyperplane singularities \& real log canonical thresholds]{Classification of real hyperplane singularities \\ by real log canonical thresholds}
\author{Dimitra Kosta}
\author{Daniel Windisch}
\keywords{real log canonical threshold, multiplicity, learning coefficient, hyperplane arrangement, Singular Learning Theory, Real Algebraic Geometry, resolution of singularities, model selection}
\subjclass{14E15, 14P05, 62F07, 62R01, 13P25}
\begin{document}
\maketitle
\thispagestyle{empty}

\begin{abstract}
The log canonical threshold (lct) is a fundamental invariant in birational geometry, essential for understanding the complexity of singularities in algebraic varieties. Its real counterpart, the \emph{real log canonical threshold} (rlct), also known as the \emph{learning coefficient}, has become increasingly relevant in statistics and machine learning, where it plays a critical role in model selection and error estimation for singular statistical models. In this paper, we investigate the rlct and its multiplicity for real (not necessarily reduced) hyperplane arrangements. We derive explicit combinatorial formulas for these invariants, generalizing earlier results that were limited to specific examples. Moreover, we provide a general algebraic theory for real log canonical thresholds, and present a SageMath implementation for efficiently computing the rlct and its multiplicity in the case or real hyperplane arrangements. Applications to examples are given, illustrating how the formulas  can also be used to analyze the asymptotic behavior of high-dimensional volume integrals.
\end{abstract}

\section{Introduction}

The log canonical threshold (lct) is a central object of study in birational geometry, see~\cite{kollar1998birational} and~\cite{lazarsfeld2004positivity}, playing a crucial role in understanding the singularities of algebraic varieties. It is an invariant that measures the complexity of singularities, and its significance spans various areas of algebraic geometry, such as the minimal model program~\cite{birkar2010existence} and the classification of singularities of algebraic varieties.

In recent years, a real analogue of the log canonical threshold, known as the \emph{real log canonical threshold} (rlct) or \emph{learning coefficient}, has become important in statistics and machine learning. Together with its \emph{multiplicity}, another birational invariant, it arises naturally in problems of error estimation and model selection, where it plays a crucial role in understanding the asymptotic behavior of statistical models. 

More specifically, the two invariants are the only missing parameters of Watanabe's criterion for model selection~\cite[Main Formula II]{Watanabe} and of its refinement due to Drton and Plummer~\cite{drton2017bayesian} known as singular Bayesian Information Criterion (sBIC). Both formulas generalize the classical Bayesian Information Criterion (BIC) to the setting of singular statistical models, such as Gaussian mixture models and neural networks, for which the BIC is invalid. In particular, the BIC is a widely used model-selection criterion that approximates the Bayesian marginal likelihood of a statistical model. It balances model fit and model complexity using the formula
$$
\text{BIC}=-2 \log L(\hat{\theta}) + d \log n ,
$$
 where $L(\hat{\theta}) $ is the maximum value of the likelihood function, $d$ is the number of free parameters in the model, and $n$ is the sample size. To use the $\text{BIC}$ for model selection, one fits each candidate model to the data, computes the BIC value for each, and then chooses the model with the lowest BIC. However, the BIC can only be used for regular models. In the case of singular models, a modification of the BIC is needed which uses real log canonical thresholds. For instance, Watanabe's approximation of the negative marginal log likelihood~\cite[Main Formula II]{Watanabe} which is given as
 $$
 -2 \log L(\hat{\theta}) + 2 \lambda \log n -2 (m-1 )\log \log n ,
 $$
  where $L(\hat{\theta}) $ is again the maximum value of the likelihood function, $n$ is the sample size, $\lambda$ is the real log canonical threshold of the true parameter set and $m$ is its multiplicity. When the model is regular, $\lambda =\frac{d}{2}$ and $m=1$, and BIC, Watanabe's formula and sBIC coincide.

  Moreover, the rlct and its multiplicity can also be used for the asymptotic evaluation of volume integrals (see \cite{Lin-dissertation, Watanabe}). In fact, the rlct and its multiplicity were initially used by Watanabe in \cite{WatanabeNeurComp2001} to prove an asymptotic approximation for the Bayesian stochastic complexity  (also known as free energy in physics). Formally, in Bayesian statistics, the Bayesian stochastic complexity  corresponds to the negative log marginal likelihood. Therefore Bayesian stochastic complexity and BIC are closely connected because BIC is an asymptotic approximation to the log marginal likelihood for regular models. In \cite{WatanabeNeurComp2001}, it was also shown that the asymptotic stochastic complexity (or free energy) is given by the Laplace transform of a volume integral. Therefore, in this way the asymptotic behaviour of volume integrals is related to the Bayesian stochastic complexity. Furthermore, the density of states, as used by Watanabe~\cite{Watanabe} in analogy to solid state physics for measuring the complexity of a statistical model around a fiber of parameters yielding the same probability distribution, is described as a volume integral, whose asymptotics is
  \[ C \varepsilon^{\lambda-1} (-\ln \varepsilon)^{m-1} \] for $\varepsilon \to 0$, see~\cite[Proof of Theorem 7.1]{Watanabe} and Section~\ref{section:examples}. 

In this paper, we investigate the rlct and its multiplicity for the class of all singularities that locally are analytically isomorphic to a real (not necessarily reduced) hyperplane arrangement. We derive explicit combinatorial formulas for these invariants which previously have been investigated mostly in very specific examples of statistical models, see the recent survey by Watanabe~\cite{watanabe2024recent}. First systematic studies are Saito's unpublished work~\cite{saito2007real}, Lin's PhD dissertation~\cite{Lin-dissertation}, and the treatment of coordinate hyperplane arrangements in the paper of Lin, Uhler, Sturmfels and Bühlmann~\cite{tubes} which we generalize.

The main contributions of this paper are:
\begin{itemize}
    \item Developing an algebraic foundation for the study of real log canonical thresholds and their multiplicities (Section~\ref{section:normal-crossing}). In particular, for a real polynomial $f \in \mathbb{R}[x_1, \ldots, x_d]$, we give a sufficient condition for the real log canonical threshold $\lambda_{\mathbb{R}}$ of $f$ to coincide with the complex log canonical threshold $\lambda_{\mathbb{C}}$ of $f$ in Lemma~\ref{lemma:lambda}, and equivalently, in Lemma~\ref{lemma:multiplicity}, a stronger sufficient condition for the corresponding multiplicities $m_{\mathbb{R}}$ and $m_{\mathbb{C}}$ to coincide. Saito~\cite{saito2007real} mentions that such results would be possible. We formalize this and give full proofs.
    \item A derivation of the complex and real log canonical threshold and its multiplicity for central hyperplane arrangements, which are products of linear forms, in Theorem~\ref{theorem:lct} and Corollary~\ref{corollary:rlct} respectively. For these results, we use the notions of a building set and wonderful compactification, which provide an appropriate log resolution for hyperplane arrangements in order to achieve explicit combinatorial formulas for the complex log canonical threshold $\lambda_{\mathbb{C}}$ in Theorem~\ref{theorem:lct} which have previously been established by  Musta{\c{t}}{\u{a}}\cite{Mustata} and Teitler\cite{Teitler08}. We show that if $f$ is a product of linear forms with real coefficients then $\text{rlct}(f)=\text{lct}(f)$, so we eventually combinatorially get the real log canonical threshold $\lambda_{\mathbb{R}}$ of a real central hyperplane arrangement in Corollary~\ref{corollary:rlct}. In combination with Remark ~\ref{remark:local-rlct}, this then gives a way to compute the real log canonical threshold for real hyperplane arrangement which need not be central.
\end{itemize}

The methods employed are grounded in the techniques of log resolutions and blowups, which facilitate resolving singularities and computing the invariants. These methods build on the approaches and results of De Concini–Procesi~\cite{DeConcini-Procesi}, Musta{\c{t}}{\u{a}}\cite{Mustata}, and Teitler\cite{Teitler08}, and they are extended to study the multiplicities of (real) log canonical thresholds, an area not yet fully explored in birational geometry.

The paper is organized as follows. In Section~\ref{section:preliminaries}, we provide the necessary preliminaries on log canonical thresholds, multiplicities, and log resolutions in both the real and complex settings. Section~\ref{section:normal-crossing} relates the notion of simple normal crossing divisors as classically used in complex birational geometry with Watanabe's notion of normal crossing functions for Singular Learning Theory.
Section~\ref{section:main} presents our main results on the rlct and its multiplicity for hyperplane arrangements, along with their proofs. In Section~\ref{section:examples}, we apply these results to several examples, illustrating how they can also be used to determine the asymptotic behavior of high-dimensional volume integrals. Finally, Section~\ref{section:code} includes a description of our linear algebra based SageMath implementation for computing rlct and multiplicities, providing practical tools for further exploration.

\section{Preliminaries}\label{section:preliminaries}

 This section is devoted to the essential background required for the statements and proofs that follow. To compute log canonical thresholds, we require the framework of resolutions of singularities, so we need to review essential background on simple normal crossing divisors, log resolutions, and blowups. Since our focus is on real log canonical thresholds of real algebraic varieties, we also provide the necessary background in real algebraic geometry that allows us to pass from the complex setting to the real one. In addition, we review the basic notions of building sets and wonderful compactifications, which in the case of hyperplane arrangements provide suitable log resolutions for the computation of real log canonical thresholds.

Let $k$ be a field and $f \in k[x_1,\ldots,x_d]$ a polynomial in $d$ indeterminates over $k$. Let $W \subseteq \A_k^d$ be a Zariski open subset. We view $f$ as a function from $W$ to $k$.

In the following sections, $k$ will either be the field $\R$ of real numbers or the field $\C$ of complex numbers. In these settings, the machinery of log resolutions described below works more generally for analytic functions $f:W \to k$, where $W \subseteq \A_k^d$ is an open subset of $\A_k^d$ (with respect to the Euclidean topology). We are interested in the case of polynomial functions in which log resolutions are a purely algebraic notion.

\subsection{$\R$-points} 
All the concepts and results of this subsection are well-known but, due to lack of a concise reference, we repeat them here.
Let $F/k$ be a field extension, where $F$ is algebraically closed. We will use the facts below in the case of the extension $\C/\R$.
We first give the general definition of a $k$-point in terms of schemes and then show that this concept aligns with our intuition of what a $k$-rational point should be in the case of an algebraic variety defined over $k$.

\begin{definition}\label{definition:k-point}
   Let $X \to \Spec(k)$ be a $k$-scheme. A \textit{$k$-rational point} (or just \textit{$k$-point}) of $X$ is a morphism of schemes $\Spec(k) \to X$ such that the following diagram is commutative:
\[
\begin{tikzcd}
\Spec(k) \arrow[r] \arrow[rd, "\text{id}"'] & X \arrow[d] \\
& \Spec(k)
\end{tikzcd}
\]
\end{definition}

In the case of $k = \R$, we will often say \textit{real point} instead of $\R$-point.

\begin{fact}\label{fact:real-max-ideals}
Let $M$ be a maximal ideal in $k[x_1,\ldots,x_d]$. Then the composition of the inclusion map $k \to k[x_1,\ldots,x_d]$ and the factor map $k[x_1,\ldots,x_d] \to k[x_1,\ldots,x_d]/M$ is a finite algebraic field extension. Moreover, the following are equivalent:
\begin{enumerate}
    \item There is a ring homomorphism $\varphi: k[x_1,\ldots,x_d] \to k$ with $\varphi|_k = \text{id}_k$ and $\ker \varphi = M$.
    \item $M = (x_1 - a_1,\ldots, x_d - a_d)$ for some $a_1,\ldots,a_d \in k$.
\end{enumerate}
\end{fact}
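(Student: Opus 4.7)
The plan is to separate the statement into its two parts: first the finiteness/algebraicity of the extension $k \hookrightarrow k[x_1,\ldots,x_d]/M$, and then the equivalence of (1) and (2), which will follow from a symmetric argument using evaluation homomorphisms.

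For the first claim, I would invoke Zariski's lemma. The quotient $k[x_1,\ldots,x_d]/M$ is a field because $M$ is maximal, and it is finitely generated as a $k$-algebra by the residue classes of $x_1,\ldots,x_d$. Zariski's lemma then forces this extension to be finite and hence algebraic. Injectivity of the composition $k \to k[x_1,\ldots,x_d]/M$ is automatic: its kernel is an ideal of the field $k$, hence either $0$ or all of $k$, and the latter would contradict $M \ne k[x_1,\ldots,x_d]$.

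For (2) $\Rightarrow$ (1), I would define $\varphi$ to be the evaluation map $f \mapsto f(a_1,\ldots,a_d)$. It manifestly restricts to the identity on $k$, and each generator $x_i - a_i$ of $M$ lies in its kernel, so $M \subseteq \ker \varphi$. Since $\varphi$ is surjective onto the field $k$, its kernel is maximal, and maximality of $M$ forces equality. For (1) $\Rightarrow$ (2), I would set $a_i := \varphi(x_i) \in k$; then $\varphi(x_i - a_i) = 0$, so $x_i - a_i \in M$ for each $i$, giving $(x_1-a_1,\ldots,x_d-a_d) \subseteq M$. Applying the previous paragraph to the tuple $(a_1,\ldots,a_d)$ shows $(x_1-a_1,\ldots,x_d-a_d)$ is itself maximal, so the two ideals coincide.

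The main obstacle is really only the invocation of Zariski's lemma for the initial finiteness assertion; the equivalence (1) $\Leftrightarrow$ (2) itself is a direct construction that does not even require this input, since maximality of $(x_1-a_1,\ldots,x_d-a_d)$ can be seen independently from the fact that the quotient $k[x_1,\ldots,x_d]/(x_1-a_1,\ldots,x_d-a_d) \cong k$ is already a field. The algebraically closed overfield $F$ plays no role here and is presumably stated for use in the subsequent material.
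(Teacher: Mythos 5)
Your proposal is correct and takes essentially the same approach as the paper: Zariski's lemma for the finiteness claim, the evaluation homomorphism for (2) $\Rightarrow$ (1), and the observation $a_i := \varphi(x_i)$ for (1) $\Rightarrow$ (2). The only cosmetic difference is that the paper presents the (1) $\Rightarrow$ (2) step as a proof by contradiction while you argue directly; the key idea is identical.
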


\begin{proof}
    The composition map $k \to k[x_1,\ldots,x_d] \to k[x_1,\ldots,x_d]/M$ is a field extension and clearly makes $k[x_1,\ldots,x_d]/M$ a finitely generated $k$-algebra. By Zariski's Lemma~\cite[Proposition 7.9]{atiyah1969commutative}, it is therefore a finite algebraic field extension.

    Statement (2) clearly implies (1). Indeed, the homomorphism $\varphi$ is just evaluation of polynomials at the point $(a_1,\ldots,a_d)$.

    So, let $\varphi$ as in (1). If we can show that for every $i \in \{1,\ldots,d\}$ there exists $a_i \in k$ such that $x_i-a_i \in M$ then the maximal ideal $(x_1- a_1,\ldots,x_d - a_d)$ is contained in the proper ideal $M$ and hence they are equal. So, assume to the contrary that there exists some $i$ such that $\{x_i - a \mid a \in k\} \cap M = \emptyset$. As $M = \ker \varphi$ and $\varphi|_k = \text{id}_k$, we derive in particular that
    \[
    \varphi(x_i) - a = \varphi(x_i) - \varphi(a) = \varphi(x_i - a) \neq 0
    \]
    for all $a \in k$. But this leads to a contradiction for $a = \varphi(x_i) \in k$.
\end{proof}

Now, we show that the intuitive notion of a $k$-point of a variety actually coincides with the abstract notion of a $k$-point.

\begin{fact}\label{fact:k-points}
    Let $k \to F$ be a field extension, where $F$ is algebraically closed.
    Let $X \to \Spec(k)$ be an integral scheme of finite type over $k$ (e.g., a variety in some projective space over $F$ locally defined by polynomials with coefficients in $k$). Let $U \subseteq X$ be an open affine subset such that $(U,\mathcal{O}_{X}|_U) \cong \Spec k[x_1,\ldots,x_d]/I$ as locally ringed spaces. (Note that $X$ can be covered by such open sets.) The following three concepts are in bijective correspondences:
    \begin{enumerate}
        \item The $k$-points $\Spec k \to X$ whose unique image point lies in $U$.
        \item The points of the variety defined by $I$ in the set-theoretical affine space $\A_F^d$ with coordinates in $k$.
        \item The maximal ideals of $k[x_1,\ldots,x_d]$ of the form $(x_1-a_1,\ldots, x_d - a_d)$ with $a_i \in k$ containing~$I$.
    \end{enumerate}
\end{fact}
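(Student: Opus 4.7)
The plan is to establish the two bijections $(1)\leftrightarrow(3)$ and $(2)\leftrightarrow(3)$ separately; composing them then yields $(1)\leftrightarrow(2)$. The central input is Fact~\ref{fact:real-max-ideals}, which characterizes the maximal ideals of $k[x_1,\ldots,x_d]$ arising as kernels of $k$-linear ring homomorphisms $k[x_1,\ldots,x_d]\to k$ as exactly those of the form $(x_1-a_1,\ldots,x_d-a_d)$ with $a_i\in k$.

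For $(1)\leftrightarrow(3)$ I would begin by noting that a $k$-morphism $\Spec k \to X$ whose unique image point lies in $U$ factors uniquely through the open immersion $U\hookrightarrow X$, because open immersions are monomorphisms. Since $U\cong \Spec(k[x_1,\ldots,x_d]/I)$ is affine, the standard adjunction between affine schemes and $k$-algebras turns such a factorization into a $k$-algebra homomorphism $k[x_1,\ldots,x_d]/I\to k$; precomposing with the quotient map yields a $k$-algebra homomorphism $\varphi\colon k[x_1,\ldots,x_d]\to k$ with $\varphi|_k=\mathrm{id}_k$ and $I\subseteq\ker\varphi$. As $\varphi$ is surjective onto a field, $\ker\varphi$ is a maximal ideal of $k[x_1,\ldots,x_d]$ containing $I$, and Fact~\ref{fact:real-max-ideals} forces it to be of the form $(x_1-a_1,\ldots,x_d-a_d)$. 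The converse is symmetric: starting from a maximal ideal of this form containing $I$, evaluation at $(a_1,\ldots,a_d)$ provides a $k$-algebra homomorphism $k[x_1,\ldots,x_d]/I\to k$ and hence a $k$-point of $U\subseteq X$. These two assignments are mutually inverse by construction.

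For $(2)\leftrightarrow(3)$ I would send $(a_1,\ldots,a_d)\in k^d$ to $M_{\mathbf a}:=(x_1-a_1,\ldots,x_d-a_d)$. A Taylor-style expansion $f=f(a_1,\ldots,a_d)+\sum_i (x_i-a_i)g_i$ (equivalently, the observation that $M_{\mathbf a}$ is the kernel of evaluation at $\mathbf a$) shows $f\in M_{\mathbf a}$ iff $f(a_1,\ldots,a_d)=0$, so $I\subseteq M_{\mathbf a}$ iff $\mathbf a$ is a common zero of all elements of $I$. Injectivity is immediate because $a_i$ is the unique element of $k$ with $x_i-a_i\in M_{\mathbf a}$, and every maximal ideal of the prescribed form is hit by construction.

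Once these two bijections are in place the proof is complete. The only genuine subtlety — and the reason Fact~\ref{fact:real-max-ideals} is indispensable — is that over a non-algebraically closed $k$ one must not conflate $k$-rational points with arbitrary closed points of $X$: there exist maximal ideals of $k[x_1,\ldots,x_d]/I$ whose residue field is a proper finite extension of $k$, and these do \emph{not} come from $k$-points. Fact~\ref{fact:real-max-ideals} is precisely what rules them out and singles out the maximal ideals of the form $(x_1-a_1,\ldots,x_d-a_d)$, so the remainder is routine bookkeeping.
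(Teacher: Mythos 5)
Your proof is correct and follows the paper's overall plan of establishing the two bijections $(1)\leftrightarrow(3)$ and $(2)\leftrightarrow(3)$ and composing. The argument for $(1)\leftrightarrow(3)$ matches the paper's, only phrased more conceptually: you invoke the factorization through the open immersion and the affine-scheme adjunction abstractly, whereas the paper writes out the corresponding commutative diagrams of ring maps explicitly; both then funnel into Fact~\ref{fact:real-max-ideals}. The genuine divergence is in $(2)\leftrightarrow(3)$. The paper extends $M=(x_1-a_1,\ldots,x_d-a_d)$ to $F[x_1,\ldots,x_d]$ and invokes Hilbert's Nullstellensatz over the algebraically closed field $F$ to identify the resulting maximal ideal with the point $(a_1,\ldots,a_d)\in V(I)\subseteq\A^d_F$, and conversely contracts back to $k[x_1,\ldots,x_d]$. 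You sidestep $F$ entirely: since the coordinates $a_i$ and the polynomials in $I$ all live over $k$, the equivalence $I\subseteq M_{\mathbf a}\iff f(\mathbf a)=0\ \forall f\in I$ is checked directly from the observation that $M_{\mathbf a}=\ker(\mathrm{ev}_{\mathbf a})$, with no Nullstellensatz needed. This is cleaner and makes explicit that the hypothesis that $F$ be algebraically closed plays no role in the statement once one restricts attention to points with coordinates in $k$; the paper's route is less economical but keeps the ambient set-theoretic picture in $\A^d_F$ visibly in play, which is the way the correspondence is later used.
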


\begin{proof}
    We first establish the correspondence between (1) and (3). Given a $k$-point $\Spec k \to X$ as in (1), we get an induced $k$-point $\Spec k \to U$ and a commutative diagramm
    \[
\begin{tikzcd}
\Spec(k) \arrow[r] \arrow[rd, "\text{id}"'] & U \arrow[d] \\
& \Spec(k).
\end{tikzcd}
\]
The corresponding commutative diagram of ring maps is 

\[
\begin{tikzcd}
k & k[x_1,\ldots,x_d]/I \arrow[l, "\overline{\varphi}"'] & k[x_1,\ldots,x_d] \arrow[l,"\pi"'] \\
& k \arrow[u] \arrow[lu, "\text{id}"'] \arrow[ru]
\end{tikzcd},
\]
where $\pi$ is the canonical projection map and $\overline{\varphi}$ is the ring map corresponding to $\Spec(k) \to U$.
Let $\varphi: k[x_1,\ldots,x_d] \to k$ be the composition of these two maps and let $M$ be its kernel which clearly is a maximal ideal. Moreover, $\varphi|_k = \text{id}_k$ as the diagram commutes. So, by Fact~\ref{fact:real-max-ideals}, $M$ is of the form as in (3) and contains $I$. This gives the map from (1) to (3).

To construct the inverse map from (3) to (1), let $M = (x_1- a_1,\ldots,x_d-a_d)$ with $a_i \in k$ be a maximal ideal of $k[x_1,\ldots, x_d]$ containing $I$. We again have a commutative diagram
\[
\begin{tikzcd}
k & k[x_1,\ldots,x_d]/M \arrow[l, "\cong"'] & k[x_1,\ldots,x_d]/I \arrow[l] \\
& k \arrow[u] \arrow[lu, "\text{id}"'] \arrow[ru]
\end{tikzcd},
\]
where the bottom to top maps are just the canonical inclusions and the right upper map is the canonical projection. The map $k \to k$ is indeed the identity because the above isomorphism is defined by $f(x_1,\ldots,x_d) + M \mapsto f(a_1,\ldots,a_d)$. The corresponding diagram of affine schemes gives the $k$-point $\Spec k \to \Spec k[x_1,\ldots,x_d]/I = U \subseteq X$. The map we just constructed is clearly inverse to the one above.

We now describe the correspondence between (3) and (2) which then finishes the proof. Let $M = (x_1 - a_1,\ldots, x_d - a_d)$ with $a_i \in k$ be a maximal ideal of $k[x_1,\ldots,x_d]$ containing $I$. This extends to the maximal ideal generated by the $x_i-a_i$ in $F[x_1,\ldots,x_d]$ which corresponds to the point $(a_1,\ldots,a_d) \in V(I) \subseteq \A_F^d$ by Hilbert's Nullstellensatz. This gives a map from (3) to (2). Its inverse is constructed in the following way:

Given a point $(a_1,\ldots,a_d) \in V(I) \subseteq \A_F^d$ with coordinates in $k$, by Hilbert's Nullstellensatz, we get a maximal ideal $(x_1 - a_1, \ldots, x_d - a_d)_{F[x_1,\ldots,x_d]}$ of $F[x_1,\ldots,x_d]$ containing $I$. Its contraction $M$ to $k[x_1,\ldots,x_d]$ clearly equals $(x_1 - a_1,\ldots,x_d - a_d) \subseteq k[x_1,\ldots,x_d]$ as it contains the $(x_1 - a_1,\ldots,x_d-a_d)$ which is itself a maximal ideal. Moreover, $M$ clearly contains $I$ and we are done.
\end{proof}

\subsection{Real and complex varieties}\label{real-and-complex-varieties}
For a map $g: X \to k$, where $X$ is any variety over $k$ and $X(k)$ is its set of $k$-points, we can define the \textit{zero locus} of $g$ over $k$ as 
\[
V_k(g) = \{x \in X(k) \mid g(x) = 0\}.
\]
In case $k = \R$, we call $V_\R(g)$ the \textit{real locus} of $g$, and if $k = \C$ we write $V(g) = V_\C(g)$. In all of our considerations, $X$ will locally be a $d$-dimensional affine space (over $\R$ or $\C$) and $g$ will be a polynomial. In this case, $V_\R(g)$ is simply the set of real points of the complex algebraic variety $V(g)$ defined by $g$.

Let $V$ be a complex algebraic variety and $V(\R)$ its set of real points. The following statements are equivalent by~\cite[Theorem 5.1]{Sottile} and the standard fact from general topology that a dense subset and an open subset of a topological space always intersect non-trivially; see~\cite[Theorem 2.2.9]{Mangolte} for a full proof of this equivalence in a more abstract setting:
\enlargethispage{\baselineskip}
\begin{enumerate}
    \item $V(\R)$ lies Zariski dense in $V$.
    \item Every irreducible component of $V$ contains a smooth point with real coordinates.
\end{enumerate}

\subsection{Analytic spaces}
We recall the definition of analytic spaces. Let $k$ be a field with an absolute value. For our purpose, $k$ will either be $\R$ or $\C$ with the usual Euclidean absolute value. 

Let $U$ be a subset of $k^n$ and let $f_1,\ldots,f_k$ be analytic functions on $U$. Denote by $\mathcal{A}_U$ the sheaf of analytic functions on $U$. The set $ Z = V_k(f_1,\ldots,f_m) = \{u \in U \mid \forall i \ f_i(u) = 0\}$ together with the sheaf $\mathcal{A}_Z = \mathcal{A}_U/\mathcal{I}$, where $\mathcal{I}$ is the ideal sheaf of $\mathcal{A}_U$ defined by $f_1,\ldots,f_m$, is called an \textit{analytic variety} over $k$. An \textit{analytic space} over $k$ is a locally ringed space $(X, \mathcal{A}_X)$ that can be covered by open sets that are isomorphic to analytic varieties over $k$ as locally ringed spaces. Morphisms of analytic spaces are just morphisms of locally ringed spaces.

\begin{fact}\cite[Corollary 1.6]{Artin-analytic}\label{fact:M-Artin}
   Let $X$ and $Y$ be analytic spaces, $x \in X$ and $y \in Y$. If there exists an isomorphism $\alpha: \widehat{\mathcal{A}_{X,x}} \cong \widehat{\mathcal{A}_{Y,y}}$ of completions of stalks, then there exists an isomorphism of open neighbourhoods of $x$ and $y$ inducing $\alpha$.
\end{fact}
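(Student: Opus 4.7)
The plan is to reduce to a local affine setting and then invoke M.~Artin's approximation theorem for convergent power series. After shrinking, we may assume that $X$ is cut out in an open set $U \subseteq k^n$ by convergent power series $f_1,\ldots,f_s$ around $x = 0$, and that $Y$ is cut out in $U' \subseteq k^{n'}$ by $g_1,\ldots,g_t$ around $y = 0$. The completed stalks then take the form $k[[x_1,\ldots,x_n]]/(f_1,\ldots,f_s)$ and $k[[y_1,\ldots,y_{n'}]]/(g_1,\ldots,g_t)$, and the problem is to upgrade a formal ring isomorphism between them to an analytic one.

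First I would encode $\alpha$ and $\alpha^{-1}$ by tuples of formal power series $\widehat{\phi} \in k[[x_1,\ldots,x_n]]^{n'}$ and $\widehat{\psi} \in k[[y_1,\ldots,y_{n'}]]^{n}$ representing the images of the ambient coordinate functions of the other space. Saying that these tuples lift mutually inverse isomorphisms of the completed analytic local rings is equivalent to a finite list of formal identities: each $g_j(\widehat{\phi})$ must lie in $(f_1,\ldots,f_s)$, each $f_i(\widehat{\psi})$ in $(g_1,\ldots,g_t)$, and the compositions $\widehat{\psi} \circ \widehat{\phi}$ and $\widehat{\phi} \circ \widehat{\psi}$ must equal the respective identity tuples modulo these ideals. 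After introducing auxiliary unknown power series for the ideal-membership certificates, these conditions assemble into a single analytic system of polynomial-type equations $S$ in the coefficients of the tuples.

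Next I would apply Artin's approximation theorem to $S$: since $\alpha$ supplies a formal solution, $S$ also admits a convergent one, chosen to agree with the formal solution modulo any prescribed power of the maximal ideal. Such a convergent solution produces honest analytic maps $\phi, \psi$ between open neighbourhoods of $x$ in $X$ and $y$ in $Y$, and the composition clauses of $S$ ensure that they are genuinely mutually inverse, giving the desired analytic isomorphism of neighbourhoods.

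The hard part will be to guarantee that the isomorphism induces \emph{precisely} $\alpha$ on the completed stalks, rather than some $\mathfrak{m}$-adically nearby isomorphism. I would deal with this either by appealing to the stronger form of Artin's theorem that permits prescribing any finite jet of the convergent solution, or by a post-processing step: the discrepancy between the induced map and $\alpha$ is a formal automorphism of $\widehat{\mathcal{A}_{X,x}}$ close to the identity, which can itself be realised analytically by the same approximation machinery and then absorbed into a change of coordinates on the neighbourhood of $x$. Keeping this bookkeeping consistent, rather than the approximation step itself, is where I expect the real technical effort to lie.
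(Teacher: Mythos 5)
The paper gives no proof of this statement, citing it directly as Artin's Corollary~1.6, so there is no in-paper argument to compare against; your sketch correctly identifies Artin approximation as the engine that drives Artin's own proof, and the reduction to a local affine system of equations with auxiliary membership certificates is faithful to that argument.

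The genuine gap is exactly where you suspect it, in the final paragraph, and neither of your proposed fixes can close it: the clause ``inducing $\alpha$'', read literally, is stronger than what Artin's Corollary~1.6 asserts and is in general false. Artin's theorem returns a convergent solution agreeing with the formal one modulo $\mathfrak{m}^c$ for any prescribed $c$, never exactly, and exact agreement is impossible in general. Already for $X = Y = \A^1_k$ at the origin, the formal automorphism of $k\llbracket x \rrbracket$ sending $x \mapsto \sum_{n \ge 1} n!\, x^n$ has zero radius of convergence, so it is induced by no analytic germ isomorphism. Your first workaround (prescribing a finite jet) leaves an uncontrolled formal tail, and the second (``absorbing the discrepancy'' by another round of approximation) merely reproduces the same problem one order higher without terminating, since the discrepancy is itself a formal automorphism with the same deficiency. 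What your argument \emph{does} establish is the correct, weaker statement actually proved by Artin: if $\widehat{\mathcal{A}_{X,x}} \cong \widehat{\mathcal{A}_{Y,y}}$ then the germs $(X,x)$ and $(Y,y)$ are analytically isomorphic. For the use made of this fact in Theorem~\ref{theorem:log-resolutions}, the weaker version suffices: one needs an analytic isomorphism matching $\widehat\gamma$ on a fixed regular system of parameters to a prescribed finite order, and the relevant formal objects there ($f\circ\rho$, the $g_i$, hence $\widehat\eta$) are a posteriori convergent.
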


\subsection{From varieties to analytic spaces} To each variety defined over a field $k$ with a valuation, we can associate an analytic space. We will need the analytic space structure only locally, so we restrict ourselves to affine varieties. 

Let $X = \Spec R$ be an affine variety over $k$, where $R = k[x_1,\ldots,x_n]/I$ for some ideal $I$ of $k[x_1,\ldots,x_n]$. As we saw in Fact~\ref{fact:k-points}, the $k$-points of $X$ are in one-to-one correspondence with the points in $V_k(I) = \{u \in k^n \mid (\forall f \in I) \ f(u) = 0\}$. By definition, $V_k(I)$ endowed with the Euclidean topology and its structure sheaf of analytic functions is an analytic variety. We denote this analytic variety by $\An_k(X)$ and, for simplicity, its sheaf by $\mathcal{A}_X$.

\subsection{Simple normal crossing support}\label{subsection:simple-normal-crossing}\cite[Definition 4.1.1]{LazarsfeldI}
Let $E = \sum n_i E_i$ be an effective Cartier divisor on a complex algebraic variety $X$, where the $E_i$ are prime divisors. $E$ is said to have \textit{simple normal crossing support}, or $E$ is \textit{simple normal crossing}, if the $E_i$ are regular and, for all $P \in X$, there exist Euclidean open neighbourhoods $U_P \subseteq X$ of $P$ and $U_0 \subseteq \A^d_\C$ of the origin $0 \in \A^d_\C$, and an analytic isomorphism $\alpha: U_0 \to U_P$ such that $\alpha(0) = P$ and $\alpha^* E|_{U_P}$ is defined by the function $u_1^{a_1} \cdots u_d^{a_d}$ for some non-negative integers $a_i$, where $u_i$ are the coordinate functions on $\A^d_\C$. In other words, if $E$ is defined by a regular function $g$ around $P$ then $g \circ \alpha = h \cdot u_1^{a_1} \cdots u_d^{a_d}$ in the ring $\mathcal{A}_X(U_0)$ for some unit $h$ of this ring.
Here, by an analytic isomorphism, we mean an isomorphism of the complex analytic spaces $U_0$ and $U_P$ as locally ringed spaces.

\begin{figure}[h!]
\centering

\begin{subfigure}[b]{0.60\textwidth}
  \centering

  \begin{minipage}{0.30\textwidth}
    \centering
    \begin{tikzpicture}[scale=1]
      \draw[thick,domain=-1.2:1.2,smooth] plot (\x,{(\x)*(\x)});
      \draw[thick] (-1.5,0) -- (1.5,0);
    \end{tikzpicture}
  \end{minipage}
  \hspace{1cm}
  \begin{minipage}{0.30\textwidth}
    \centering
    \begin{tikzpicture}[scale=1]
      \draw[thick] (-1.5,-1) -- (1.5,1);
      \draw[thick] (-1.5,1) -- (1.5,-1);
      \draw[thick] (-2,0) -- (2,0);
    \end{tikzpicture}
  \end{minipage}

  \caption{Non-simple normal crossing}
\end{subfigure}
\hspace{1cm}
\begin{subfigure}[b]{0.30\textwidth}
  \centering
  \begin{tikzpicture}[scale=0.9]
    \draw[thick] (-2,1) -- (2,1);
    \draw[thick] (-2,0) -- (2,0);
    \draw[thick] (-2,-1) -- (2,-1);
    \draw[thick] (-1.5,-1.5) -- (1.5,1.5);
  \end{tikzpicture}
  \caption{Simple normal crossing}
\end{subfigure}

\caption{Comparison of normal crossing configurations.}
\end{figure}

\subsection{Morphisms defined over $\R$}\label{subsection:R-morphisms} Let $\rho: X \to Y$ be a morphism of complex varieties. By abuse of notation, we will say that \textit{$\rho$ is defined over $\R$} if there exists a morphism $\rho_\R: X_\R \to Y_\R$ of $\R$-schemes such that $\rho = \rho_\R \times_{\Spec(\R)} \Spec(\C)$. 

In other words, consider $\rho$ locally on open affine subsets of $X$ and $Y$, and let $ \varphi: B \to A$ be the corresponding homomorphism of $\C$-algebras. Then, $\rho$ is defined over $\R$ if and only if there exists an $\R$-algebra homomorphism $\varphi_\R: \R[x_1,\ldots,x_d]/I \to \R[y_1,\ldots,y_m]/J$ such that $\varphi = \varphi_\R \otimes_\R \C$, where $I$ and $J$ are ideals of the respective polynomial rings.
In particular, if there are coverings of $X$ and $Y$ by open affine subsets on which they are defined by real polynomials and on which $\rho$ is defined by regular functions with real coefficients, then $\rho$ is defined over $\R$.

\subsection{Log resolutions}
\begin{fact}\label{fact:log-resolutions}
    Let $Y$ be an irreducible complex algebraic variety, and let $D \subseteq Y$ be an effective Cartier divisor on $Y$.
    \begin{enumerate}
        \item \cite[Theorem 4.1.3]{LazarsfeldI} There is a projective birational morphism $\rho: X \to Y$, where $X$ is regular and $\rho^*D + \exc(\rho)$ is a divisor with simple normal crossing support. Here $\exc(\rho)$ denotes the exceptional set of $\rho$, namely, the set of points where $\rho$ is not a local isomorphism. A map $\rho$ satisfying this property is called a \textit{log resolution} of the pair $(Y,D)$.
        \item \cite[Theorem 4.1.3]{LazarsfeldI} One can construct $\rho$ as a sequence of blow-ups along smooth centers supported in the singular loci of $D$ and $Y$. In particular, one can assume that $\rho$ is an isomorphism over $Y \setminus (\Sing(Y) \cup \Sing(D))$.
        \item \cite[Chapter II, Theorem 4.9]{Hartshorne} Every projective morphism of varieties is proper.
        \item  If $Y = \A^d_\C$ is an affine space and $D$ is a hyperplane arrangement defined by a real polynomial then the centers of the blow-ups can additionally be chosen to be defined over $\R$. Indeed, in this case, the centers are linear spaces defined by real linear forms, see \ref{wonderful-compactifications} below. In particular, $\rho$ can be chosen to be defined over $\R$ and all prime divisors of $\rho^*D + \exc(\rho)$ are defined over $\R$.
    \end{enumerate}
\end{fact}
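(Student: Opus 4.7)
The plan is to prove only statement (4), since (1)--(3) are cited directly from standard references. Write $D = \sum_i n_i H_i$, where each $H_i = V_\C(\ell_i)$ is defined by a linear form $\ell_i \in \R[x_1,\ldots,x_d]$; such a decomposition exists by the hypothesis that $D$ is a hyperplane arrangement defined by a real polynomial. The strategy is to invoke the De Concini--Procesi wonderful compactification described in~\ref{wonderful-compactifications}: the log resolution $\rho$ is realized as an iterated sequence of blow-ups whose centers are (strict transforms of) elements of a building set $\mathcal{B}$ consisting of certain intersections $H_{i_1} \cap \cdots \cap H_{i_k}$.

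First I would observe that every such intersection is cut out by the real linear forms $\ell_{i_1},\ldots,\ell_{i_k}$, so each initial element of $\mathcal{B}$ is a linear subspace of $\A^d_\C$ that is the base change along $\C/\R$ of a closed subscheme of $\A^d_\R$; in particular it is defined over $\R$ in the sense of~\ref{subsection:R-morphisms}. Next I would argue by induction on the number of blow-ups: if the partial composite $\rho_j: X_j \to \A^d_\C$ is defined over $\R$ and the next center $Z_j \subseteq X_j$ is the strict transform of an element of $\mathcal{B}$, then since both strict transforms and blow-ups commute with the faithfully flat base change $\Spec(\C) \to \Spec(\R)$, $Z_j$ is itself defined over $\R$, and therefore so is the blow-up $X_{j+1} \to X_j$. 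Composing all of these, $\rho$ is defined over $\R$.

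It remains to check that every prime component of $\rho^* D + \exc(\rho)$ is defined over $\R$. For a strict transform of some $H_i$ this is immediate, since strict transforms commute with the base change $\C/\R$ and $H_i$ itself is defined over $\R$. For an exceptional divisor $E$ introduced at a blow-up with real center $Z$, one identifies $E$ with the projectivization of the conormal sheaf of $Z$, which is likewise the base change to $\C$ of an $\R$-scheme. The main obstacle I expect is the bookkeeping in the inductive step: verifying that the local Proj construction of the blow-up along a center defined over $\R$ actually descends to an $\R$-scheme whose base change to $\C$ recovers the complex blow-up. This is a standard compatibility of blow-ups with flat base change applied to $\C/\R$, but it is the single point where some care is required before one can freely conclude that $\rho$ and all prime divisors of $\rho^*D + \exc(\rho)$ are defined over $\R$.
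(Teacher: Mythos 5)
Your argument is correct and is essentially the justification the paper intends: the De Concini--Procesi wonderful compactification has centers that are (proper transforms of) real linear subspaces, and the whole construction descends to $\R$ because blow-ups and proper transforms commute with the flat base change $\Spec(\C)\to\Spec(\R)$. The paper itself gives only a pointer to Subsection~\ref{wonderful-compactifications}; you have correctly filled in the induction on the blow-up sequence and identified the one technical point (compatibility of the $\mathrm{Proj}$/Rees-algebra construction with flat base change) on which the descent hinges, which is standard.
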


\subsection{$\R$-normal crossing functions}\label{subsection:R-normal-crossing} We follow~\cite[Definition 2.8]{Watanabe}. Let $X$ be a real analytic space. We say that a real analytic function $g: X \to \R$ is \textit{$\R$-normal crossing} at a point $P \in X$ if there exist (Euclidean) open neighbourhoods $U_P \subseteq X$ of $P$ and $U_0 \subseteq \A^d_\R$ of the origin $0 \in \A^d_\R$, and a real analytic isomorphism $\gamma: U_0 \to U_P$ such that $\gamma(0) = P$ and $g \circ \gamma = \eta \cdot u_1^{a_1} \cdots u_d^{a_d}$ for some non-negative integers $a_i$, where $u_i$ are the coordinate functions on $\A^d_\R$ and $\eta$ is a real analytic function that has no zeros on $U_0$.

\subsection{Blow ups} Log resolutions exist over every field $k$ of characteristic zero by a celebrated result of Hironaka~\cite{hironaka1964resolution} and can be constructed using a composition of so-called blow ups along proper subvarieties of $V_k(f)$. In the particular case of affine $d$-space over $k$, the \textit{blow up} along the scheme defined by $f_1,\ldots,f_\nu \in k[x_1,\ldots,x_d]$ is the projection map
$\pi_{f_1,\ldots,f_\nu}: \text{Bl}_{f_1,\ldots,f_\nu}(\A_k^d) \to \A_k^d$, where
\[
 \text{Bl}_{f_1,\ldots,f_\nu}(\A_k^d) = \overline{\{ (x,(f_1(x): \ldots : f_\nu(x))) \in \A_k^d \times \P^{\nu-1}_k \mid x \in \A_k^d \setminus V_k(f) \}}.
\]
Often, the $d$-dimensional variety $\text{Bl}_{f_1,\ldots,f_\nu}(\A_k^d)$ is itself called the blow up. Intuitively speaking, to construct $\text{Bl}_{f_1,\ldots,f_\nu}(\A_k^d)$, every point $x$ in the variety $V$ defined by the $f_1,\ldots,f_\nu$ is replaced by the projective space of lines through $x$ perpenticular to $V$. Moreover, the blow up is an isomorphism outside of the scheme defined by $f_1,\ldots,f_\nu$.

Let $S \subseteq \A_k^d$ be a closed subscheme not contained in the closed subscheme defined by $f_1,\ldots,f_\nu$. The closure of the pullback under $\pi_{f_1,\ldots,f_\nu}$ of the open subscheme defined in $S$ by $f_1,\ldots,f_\nu$ is called the \textit{proper transform} of $S$ under $\pi_{f_1,\ldots,f_\nu}$. The pullback of the closed subscheme defined by $f_1,\ldots,f_\nu$ is called the \textit{exceptional divisor} of the blowup.

\subsection{Wonderful compactifications}\label{wonderful-compactifications} Let $k$ be a field. By a \textit{hyperplane arrangement} in $\A^d_k$, we mean the closed subscheme defined by a function $f$ that is the finite product of linear polynomials in $d$ variables with coefficients in $k$. Note that this subscheme is not necessarily reduced. For hyperplane arrangements over $\C$, De Concini and Procesi~\cite{DeConcini-Procesi} discovered a uniform way of constructing log resolutions by blow ups. This applies more generally to schemes that locally look like a union of hyperplanes~\cite{wonderful-compactifications}.

Let $f = \prod L_i^{s_i}$ define a hyperplane arrangement and $H_i$ the hyperplane defined by $L_i$. Denote by $\mathcal{A} = \{H_1,\ldots, H_n\}$ the (set-theoretical) hyperplane arrangement defined by $f$, and let $\overline{L}(\mathcal{A})$ be the intersection lattice of $\mathcal{A}$, that is, the set of all intersections of the $H_i$. By $L(\mathcal{A})$ we denote the set of all elements of $\overline{L}(\mathcal{A})$ except for the ambient affine space.


A subset $\mathcal{G} \subseteq L(\mathcal{A})$ is a \textit{building set} for $\mathcal{A}$ if, for every $C \in L(\mathcal{A})$, the set of those elements of $\mathcal{G}$ minimally containing $C$ intersect transversally and the intersection is $C$. So, the set $L(\mathcal{A})$ itself is a building set.

Take any total order $\leq$ on $\mathcal{G}$ such that $V\subseteq W$ implies $V \leq W$ for all $V,W \in \mathcal{G}$. The \textit{wonderful compactification} with respect to $\mathcal{G} = \{U_1 \leq U_2 \leq \ldots \leq U_N\}$ is a sequence of blow-ups, first blowing up along $U_1$, then blowing up the result at the proper transform of $U_2$, and so on. It only depends on the order $\leq$ up to isomorphism~\cite[Proposition 2.13]{wonderful-compactifications} and it is a log resolution.

It follows from the construction that the prime divisors of the pullback of $f$ under the wonderful compactification with respect to a building set $\mathcal{G}$ are in bijection with the elements of $\mathcal{G}$, see~\cite{Teitler08} for a full treatment. This bijection is given in the following way: Let $i \in \{1,\ldots,N\}$ so that $U_i \in \mathcal{G}$. We inductively define an closed integral subscheme $D_{U_i}^{(j)}$ in the $j$-th blow up step of the wonderful compactification, for $j \in \{0,\ldots,N\}$. We set $D_{U_i}^{(0)} = U_i$. For $j \in \{0,\ldots,N\}$, let $D_{U_i}^{(j+1)}$ be the proper transform of $D_{U_i}^{(j)}$ in the $(j+1)$-st blow up step if $i \neq j+1$, and let $D_{U_i}^{(j+1)}$ be the exceptional divisor of the $(j+1)$-st blow up step if $i = j+1$. Now, as we took an exceptional divisor of a blow up exactly once, and a proper transform in all other cases, $D_{U_i} = D_{U_i}^{(N)}$ is actually a prime divisor of the pullback of $f$, and all prime divisors of $f$ are of this form.

\section{Real and complex normal crossing divisors}\label{section:normal-crossing}
 In Section~\ref{section:normal-crossing}, we will explore the relationship between two notions of normal crossing divisors: simple normal crossing divisors (see~\ref{subsection:simple-normal-crossing}), commonly encountered in complex algebraic geometry, and $\R$-normal crossing functions (see~\ref{subsection:R-normal-crossing}), as introduced in Watanabe's book on Singular Learning Theory~\cite{Watanabe}. While simple normal crossing divisors rely on arbitrary local analytic isomorphisms over $\C$, the notion of $\R$-normal crossing is constrained to those defined over $\R$. Theorem~\ref{theorem:log-resolutions} establishes that, in a specific sense, these two notions align. This alignment is a foundational step toward computing real log canonical thresholds based on complex log canonical thresholds.

\subsection{Log resolutions and $\R$-normal crossing}
In order to use results from complex birational geometry for our study of real log canonical thresholds, we want to relate the notions of a log resolution with the concept that, in order to avoid confusion, we called $\R$-normal crossing. It might seem trivial that a simple normal crossing divisor defines an $\R$-normal crossing function at every $\R$-point of the ambient scheme, as both definitions use a local analytic isomorphism $\alpha$. However, for $\R$-normal crossing, $\alpha$ needs to be an isomorphism of analytic spaces over $\R$. This is the subtle part of the following result which we achieve by passing to maps of completions of local rings. 
All results of this section apply to real hyperplane arrangements by Fact~\ref{fact:log-resolutions}(4).


\begin{theorem}\label{theorem:log-resolutions}
    Let $f \in \R[x_1,\ldots,x_d]$ be a real polynomial such that there exists a log resolution $\rho: X \to \A^d_\C$ for $(\A^d_\C,\div(f))$ that is defined over $\R$. Then $f \circ \rho$ is $\R$-normal crossing at every $\R$-point $P$ of $X$.
\end{theorem}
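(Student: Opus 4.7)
The plan is to descend the complex simple-normal-crossing form of $\rho^{*}\div(f)+\exc(\rho)$ at $P$ to an $\R$-normal crossing form for $f\circ\rho$ by constructing a suitable isomorphism of completed real analytic stalks at $P$ and then invoking Fact~\ref{fact:M-Artin} to realize it as an actual real analytic isomorphism near $P$. The main obstacle is the very first step: the simple normal crossing data by itself only supplies complex local coordinates at $P$, and nothing a priori says these can be chosen real. The hypothesis that $\rho$ is defined over $\R$ (together with Fact~\ref{fact:log-resolutions}(4)) is exactly what makes the prime components of $\rho^{*}\div(f)+\exc(\rho)$ defined over $\R$, hence cut out near the real point $P$ by real regular functions; once we have that, the rest is formal commutative algebra plus Artin approximation.

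Let $E_{1},\ldots,E_{m}$ be those prime divisors appearing in $\div(f\circ\rho)$ and passing through $P$, with multiplicities $a_{1},\ldots,a_{m}$. By simple normal crossing, $X$ is regular at $P$, each $E_{i}$ is regular at $P$, and the $E_{i}$ meet transversally there. Each $E_{i}$ is defined over $\R$, so its local ideal $\mathfrak{p}_{E_{i}}\subset\mathcal{O}_{X_{\R},P}$ is generated by a real regular function $g_{i}$ whose differential at $P$ is nonzero; transversality of the $E_{i}$ at the real point $P$ translates into $\R$-linear independence of the classes $\overline{g_{i}}\in\mathfrak{m}_{P}/\mathfrak{m}_{P}^{2}$. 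Since $X_{\R}$ is regular of dimension $d$ at $P$, I would extend $g_{1},\ldots,g_{m}$ to a regular system of parameters $u_{1}=g_{1},\ldots,u_{m}=g_{m},u_{m+1},\ldots,u_{d}$ of $\mathcal{O}_{X_{\R},P}$. The completed real analytic stalk at a real smooth point of dimension $d$ is a formal power series ring, so $u_{i}\mapsto u_{i}$ yields an $\R$-algebra isomorphism $\widehat{\mathcal{A}_{X,P}}\cong\R[[u_{1},\ldots,u_{d}]]$. In this regular (hence factorial) ring the principal ideal of $f\circ\rho$ factors as $\prod_{i=1}^{m}(u_{i})^{a_{i}}$—its height-one prime components are exactly the extensions of the $\mathfrak{p}_{E_{i}}$, with valuations $a_{i}$—so $f\circ\rho=\eta\cdot u_{1}^{a_{1}}\cdots u_{m}^{a_{m}}$ in the completion for some unit $\eta$.

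Finally I would apply Fact~\ref{fact:M-Artin} to $(X,P)$ and $(\A^{d}_{\R},0)$: the isomorphism above is induced by an isomorphism $\gamma\colon U_{0}\to U_{P}$ of real analytic open neighbourhoods with $\gamma(0)=P$. By faithful flatness of the completion of the Noetherian local ring $\mathcal{A}_{\A^{d}_{\R},0}$, the divisibility $f\circ\rho\circ\gamma\in(u_{1}^{a_{1}}\cdots u_{m}^{a_{m}})$ valid formally already holds at the level of real analytic germs, and the quotient $\widetilde{\eta}:=(f\circ\rho\circ\gamma)/(u_{1}^{a_{1}}\cdots u_{m}^{a_{m}})$ is a real analytic germ with $\widetilde{\eta}(0)=\eta(0)\ne 0$, hence non-vanishing on a shrunk neighbourhood of $0$. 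Setting $a_{m+1}=\cdots=a_{d}=0$, this is exactly the $\R$-normal crossing form from Section~\ref{subsection:R-normal-crossing}.
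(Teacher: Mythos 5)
Your proof is correct and follows the same basic architecture as the paper's: produce a real regular system of parameters at $P$ compatible with the divisors of $\div(f\circ\rho)$ through $P$, pass to the completed real analytic stalk via Cohen's structure theorem, invoke Fact~\ref{fact:M-Artin} to realize the resulting formal isomorphism as an honest analytic chart $\gamma$, and finally descend the unit factor by faithful flatness of completion. The genuine (if modest) difference is in how the real coordinate functions are obtained. The paper starts from the complex SNC chart $\alpha$ at $P$, sets $g_i=\widehat{\alpha}^{-1}\beta^{-1}(u_i)\in\widehat{\mathcal{A}_{X,P}}$, and then argues that those $g_i$ with $a_i\neq 0$ are defined over $\R$ because the prime divisors they cut out are; this is then fed through a diagram chase with $\mathcal{O}_{X_\R,P}\otimes_\R\C\cong\mathcal{O}_{X,P}$ and faithful flatness of $\R\to\C$ to conclude the $g_i$ form a regular system of parameters in $\widehat{\mathcal{A}_{X_\R,P}}$. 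You bypass the chart $\alpha$ entirely and take $g_i$ to be a real local generator of the ideal of $E_i$ in $\mathcal{O}_{X_\R,P}$, using transversality of the $E_i$ for linear independence in $\mathfrak m_P/\mathfrak m_P^2$ and then extending to a full regular system of parameters. This is slightly more economical, and it also makes explicit a small point that the paper's proof leaves implicit: the coordinates $u_{m+1},\ldots,u_d$ not attached to any component of $\div(f\circ\rho)$ through $P$ must themselves be chosen real, which the transport-from-$\alpha$ construction does not automatically guarantee. What the paper's route buys in return is a tighter link to the complex SNC data (the equality~(\ref{equation:at-zero}) is carried through the whole argument rather than re-derived from the divisor description), which is convenient for the discussion of the relative canonical divisor in Section~\ref{subsection:relative-canonical-divisor}. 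Both proofs rely on the same nontrivial inputs — Cohen's theorem, Fact~\ref{fact:M-Artin}, and the hypothesis (ultimately from Fact~\ref{fact:log-resolutions}(4)) that the prime divisors of $\div(f\circ\rho)$ are defined over~$\R$.
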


\begin{proof}
    By assumption, we find a log resolution $\rho: X \to \A_\C^d$ defined over $\R$, and all the prime divisors of $\rho^*\div(f) + \exc(\rho) = \div(f \circ \rho)$ are defined over $\R$. Let $\rho_\R: X_\R \to \A_\R^d$ be such that $\rho = \rho_\R \times_{\Spec(\R)} \Spec(\C)$, see~\ref{subsection:R-morphisms}. Let $P$ be an $\R$-point of $X$. As $\rho$ is a log resolution, we know that $f \circ \rho$ is simple normal crossing at $P$. So, let $U_P \subseteq X$ and $U_0 \subseteq \A_\C^d$ be Euclidean open neighbourhoods of $P$ and the origin, respectively, and let $\alpha: U_0 \to U_P$ be an isomorphism of analytic spaces over $\C$ such that $\alpha(0) = P$ and 
    \begin{equation}\label{equation:at-zero}\tag{$\ast$}
        f \circ \rho \circ \alpha = h \cdot u_1^{a_1} \cdots u_d^{a_d} 
    \end{equation}
    in the ring $\mathcal{A}_{\A_\C^d}(U_0)$ for some unit $h$ of this ring.

Both the ring of germs of regular functions in a point of $d$-dimensional affine space (over $\R$ and $\C$) and the ring of germs of analytic functions in the same point complete to the ring of formal power series in $d$ variables (see \cite{cohen}). Moreover, completion and factoring out an ideal commute.
Therefore, the inclusions $\mathcal{O}_{X_\R,P} \hookrightarrow \mathcal{A}_{X_\R,P}$ and $\mathcal{O}_{X,P} \hookrightarrow \mathcal{A}_{X,P}$ induce isomorphisms of completions of these rings. 
    
    By the canonical map $\mathcal{A}_X(U_P) \to \widehat{\mathcal{A}_{X,P}}$, we can consider $f \circ \rho$ as an element of $\widehat{\mathcal{A}_{X,P}}$. The map $\alpha$ induces a $\C$-algebra isomorphism $\widehat{\alpha}: \widehat{\mathcal{A}_{X,P}} \to \widehat{\mathcal{A}_{\A_\C^d,0}}$. Moreover, by the previous paragraph, the ring $\widehat{\mathcal{A}_{\A_\C^d,0}}$ is a complete regular local ring and $u_1,\ldots,u_d$ form a regular system of parameters. Cohen's Structure Theorem yields an isomorphism of $\C$-algebras $\beta: \widehat{\mathcal{A}_{\A_\C^d,0}} \to \C\llbracket u_1,\ldots,u_d \rrbracket$ with $u_i \mapsto u_i$.

    Let $g_i := \widehat{\alpha}^{-1} \circ \beta^{-1}(u_i) \in \widehat{\mathcal{A}_{X,P}}$. Then, the $g_i$ with $a_i \neq 0$ locally at $P$ define the prime divisors of $\div(f \circ \rho)$ and are, hence, defined over $\R$. Also, the $g_i$ are a regular system of parameters in $\widehat{\mathcal{A}_{X,P}}$ as $\beta \circ \widehat{\alpha}$ is an isomorphism. 


    As $\rho = \rho_\R \times_{\Spec(\R)} \Spec(\C)$, we have in particular that $X = X_\R \times_{\Spec(\R)} \Spec(\C)$.
    Therefore, we have a canonical isomorphism  and two canonical inclusions:
    \[
\begin{tikzcd}
\mathcal{O}_{X_\R,P} \otimes_\R \C \arrow[r, "\cong"] \arrow[d, hookrightarrow] & \mathcal{O}_{X,P} \arrow[d, hookrightarrow] \\
\mathcal{A}_{X_\R,P} \otimes_\R \C  & \mathcal{A}_{X,P}
\end{tikzcd}
\]
 Moreover, $\widehat{\mathcal{O}_{X_\R,P}} \otimes_\R \C$ is a complete ring because $\C$ is a finite field extension of $\R$. Using this and completing the above diagram, we get a commutative diagram

\[
\begin{tikzcd}
\widehat{\mathcal{O}_{X_\R,P}} \otimes_\R \C \arrow[r, "\cong"] \arrow[d, "\cong"] & \mathcal{O}_{X_\R,P} \widehat{\otimes}_\R \C\arrow[r, "\cong"] \arrow[d, hookrightarrow] & \widehat{\mathcal{O}_{X,P}} \arrow[d, "\cong"] \\
\widehat{\mathcal{A}_{X_\R,P}} \otimes_\R \C \arrow[r, "\cong"] & \mathcal{A}_{X_\R,P} \widehat{\otimes}_\R \C  & \widehat{\mathcal{A}_{X,P}},
\end{tikzcd}
\]
where $\widehat{\otimes}$ means first taking tensor product and then completion.
So, we have an isomorphism $\widehat{\mathcal{A}_{X_\R,P}} \otimes_\R \C \cong \widehat{\mathcal{A}_{X,P}}$ respecting functions and, in particular, mapping $g_i \mapsto g_i$. Hence, the $g_i$ form a regular system of parameters in $\widehat{\mathcal{A}_{X_\R,P}} \otimes_\R \C$. Moreover, as $\C$ is a faithfully flat $\R$-module and tensoring by a faithfully flat module preserves regular systems of parameters, the $g_i$ also form a regular system of parameters in $\widehat{\mathcal{A}_{X_\R,P}}$. Cohen's Structure Theorem yields an isomorphism $\widehat{\gamma}: \widehat{\mathcal{A}_{X_\R,P}} \cong \R\llbracket u_1,\ldots, u_d\rrbracket \cong \widehat{\mathcal{A}_{\A_\R^d,0}}$ mapping $g_i$ to $u_i$. By Fact~\ref{fact:M-Artin}, this isomorphism must come from an analytic isomorphism $\gamma$ of real Euclidean open neighbourhoods $V_0$ of $0$ and $V_P$ of $P$, respectively.

Using the isomorphism $\widehat{\mathcal{A}_{X_\R,P}} \otimes_\R \C \cong \widehat{\mathcal{A}_{X,P}}$, the fact that $f \circ \rho$ is defined over $\R$ and~(\ref{equation:at-zero}) above, we see that $f\circ \rho = \widehat{\eta} \cdot g_1^{a_1} \cdots g_d^{a_d}$ as elements of $\widehat{\mathcal{A}_{X_\R,P}}$, where $\widehat{\eta}$ is a unit of this ring. Therefore, $\widehat{\gamma}(\widehat{\eta})$ is a unit in $\R\llbracket u_1,\ldots,u_d\rrbracket$, that is, a (convergent) power series with non-zero constant term. So, $\widehat{\gamma}(\widehat{\eta})$ is the germ of a function that has no zeros in some open Euclidean neighbourhood $U$ of the origin. We can hence replace $V_P$ by the smaller open neighbourhood $V_P' = V_P \cap U$ of $P$ and choose a function $\eta \in \mathcal{A}_{X_\R}(V_P')$ inducing $\widehat{\eta}$ such that $\eta \circ \gamma$ has no zeros on $V_0' := \gamma^{-1}(V_P')$. Hence, the equality $f \circ \rho \circ \gamma = (\eta \circ \gamma) \cdot u_1^{a_1} \cdots u_d^{a_d}$ shows that $f \circ \rho$ is $\R$-normal crossing at $P$.
\end{proof}

\subsection{The relative canonical divisor}\label{subsection:relative-canonical-divisor}
Let $D$ be an effective Cartier divisor on $\A^d_\C$. Let $\rho: X \to \A^d_\C$ be a log resolution of the pair $(\A^d_\C,D)$ and let $\rho^*D + \exc(\rho) = \sum_{i = 1}^m a_i E_i$, where the $E_i$ are prime divisors on $X$ and the $a_i$ are positive integers.

 For simplicity, we use a local description of the \textit{relative canonical divisor} $K_\rho$ of $\rho$, see~\cite[Remark 4.2]{Popa-lct-notes}. Let $P \in X$ and $f$ be a function defining $D$ locally at $P$. Let $\alpha$ be a local analytic isomorphism such that $f \circ \rho \circ \alpha = h \cdot u_1^{a_{i_1}} \cdots u_k^{a_{i_k}}$, with $k \in \{0,\ldots,d\}$ and $i_j \in \{1,\ldots,m\}$, as in Section~\ref{subsection:simple-normal-crossing} above. The relative canonical divisor of $\rho$ is defined locally at $P$ by the determinant $\det \Jac (\rho \circ \alpha)$ of the Jacobian matrix of $\rho \circ \alpha$ with respect to the local coordinates $u_1,\ldots,u_d$.

 It is a standard fact in birational geometry that the prime divisors of $K_\rho$ are among the $E_i$, so that $K_\rho = \sum_{i = 1}^m b_i E_i$ for some non-negative integers $b_i$. In particular, $\det \Jac(\rho \circ \alpha) = h' \cdot u_1^{b_{i_1}} \cdots u_d^{b_{i_d}}$, for some section $h'$ of $\mathcal{A}_X$ that is locally a unit at $P$.

 If $P$ is an $\R$-point of $X$ then, following the proof of Theorem~\ref{theorem:log-resolutions}, we get the local representation $\det \Jac(\rho \circ \gamma) = \eta' \cdot u_1^{a_{i_1}} \cdots u_d^{a_{i_d}}$, where $\gamma$ is a local analytic isomorphism from a neighbourhood of $P$ to a neighbourhood of $0 \in \A^d_\R$ and $\eta'$ has no zeros around $0 \in \A^d_\R$, see also Section~\ref{subsection:R-normal-crossing}.

\subsection{The log canonical threshold}\label{subsection:log-canonical-threshold} Let $f \in \C[x_1,\ldots,x_d]$ and let $\rho: X \to \A^d_\C$ be a log resolution for $(\A^d_\C,\div(f))$. Write
\[
\div(f \circ \rho) = \sum_{i = 1}^m a_i E_i
\]
and 
\[
K_\rho = \sum_{i = 1}^m b_i E_i,
\]
where $a_i$ are positive integers and $b_i$ are non-negative integers.

\begin{definition}
  The \textit{log canonical threshold} $\lambda_\C$ of $f$ is given by
\[
\lambda_\C = \min_{i \in \{1,\ldots,m\}} \frac{b_i + 1}{a_i}.
\]  
\end{definition}

To $f$ and a compact subset $W \subseteq \C^d$, we can associate a zeta function $\zeta_f: \C \to \C$ defined by $\zeta_f(s) = \int_{W} |f(w)|^{2s} dw$. All the poles of these functions are real numbers and the largest among the poles is~$-\lambda_\C$.

\begin{definition}
    We call  the \textit{multiplicity} of the log canonical threshold of $f$ and denote it by $m_\C$, the maximum order of $-\lambda_\C$ as pole of a zeta function associated to $f$, which is given by
    \[
    m_\C =  \max_{P \in X} \bigg\vert \Set{  i \in \{1,\ldots,m\} | \lambda_\C = \frac{b_i + 1}{a_i}, P \in E_i } \bigg\vert.
    \]
\end{definition}

By the interpretation via zeta functions, we see that the pair $\lct(f) = (\lambda_\C,m_\C)$ is independent from the choice of log resolution~$\rho$.

\subsection{The real log canonical threshold} Let $f \in \R[x_1,\ldots,x_d]$. We follow~\cite[Definition~2.7 \& p. 34]{Watanabe} for the definition of the real log canonical threshold of $f$ and its multiplicity; see also~\cite[Theorem 7.1]{tubes}. Let $U$ be a $d$-dimensional real manifold and $\sigma: U \to \A^d_\R$ a proper real analytic map that restricts to an isomorphism $ U \setminus \sigma^{-1}(V_\R(f)) \to \A^d_\R \setminus V_\R(f)$ and satisfies the following property: For every $P \in U$, there exists an analytic isomorphism $\gamma: U_0 \to U_P$, where $U_0$ and $U_P$ are open Euclidean neighbourhoods of $0 \in \A^d_\R$ and $P \in U$, respectively, such that 
\[
f \circ \sigma \circ \gamma = \eta \cdot u_1^{a_1} \cdots u_d^{a_d}
\]
and
\[
\det \Jac(\sigma \circ \gamma) = \eta' \cdot u_1^{b_1} \cdots u_d^{b_d}
\]
for some non-negative integers $a_i, b_i$ and some function $\eta,\eta' \in \mathcal{A}_U(U_P)$ that have no zeros on $U_P$.

\begin{definition}
The \textit{real log canonical threshold} $\lambda_\R$ of $f$ and its \textit{multiplicity} $m_\R$ are defined by 
\[
\lambda_\R =  \inf_{\substack{P \in U \\ i \in \{1,\ldots,d\}, a_i \neq 0}} \frac{b_i + 1}{a_i} 
\]
and
\[
m_\R = \max_{P \in U} \bigg\vert \Set{  i \in \{1,\ldots,d\} | a_i \neq 0, \lambda_\R = \frac{b_i + 1}{a_i}} \bigg\vert.
\]
We define $\rlct(f) = (\lambda_\R,m_\R)$.
\end{definition}

 This pair is independent from the choice of $\sigma$ by an interpretation similar to the one in Section~\ref{subsection:log-canonical-threshold} of $-\lambda_\R$ as largest pole of zeta functions associated to $f$; see also~\cite[Theorem 2.4]{Watanabe}. So, we might choose $U = X_\R$ and $\sigma = \rho_\R$, where $\rho: X \to \A_\C^d$ is a log resolution as in Theorem~\ref{theorem:log-resolutions}. As in the discussion of Section~\ref{subsection:relative-canonical-divisor}, we write
\[
\div(f \circ \rho) = \sum_{i = 1}^m a_i E_i \text{ and } K_\rho = \sum_{i = 1}^m b_i E_i,
\]
where the $a_i$ are positive integers and the $b_i$ are non-negative integers. Using this, we get

\[
    \lambda_\R = \min_{\substack{i \in \{1,\ldots,m\} \\ E_i \cap X(\R) \neq \emptyset}} \frac{b_i + 1}{a_i}
\]
and
\[
    m_\R =  \max_{P \in X(\R) } \bigg\vert \Set{  i \in \{1,\ldots,m\} | \lambda_\R = \frac{b_i + 1}{a_i}, P \in E_i } \bigg\vert.
\]

\subsection{$\lambda_\R$ and $m_\R$ from a log resolution}
The following lemma is a sufficient condition of when $\lambda_\R = \lambda_\C$ for a polynomial $f \in \R[x_1,\ldots,x_d]$.

\begin{lemma}\label{lemma:lambda}
    Let $f \in \R[x_1,\ldots,x_d]$ and $\rho$ a log resolution as in Theorem~\ref{theorem:log-resolutions}. We use the notation from Section~\ref{subsection:relative-canonical-divisor}. Suppose that every prime divisor $E_i$ of $\div(f \circ \rho)$, for $i \in \{1,\ldots,m\}$, contains an $\R$-point of $X$. Then $\lambda_\R = \lambda_\C$.
\end{lemma}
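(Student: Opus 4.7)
My plan is to reduce the lemma to a direct comparison of two explicit formulas that have already been derived earlier in the excerpt. On the complex side, Section~\ref{subsection:log-canonical-threshold} gives
\[
\lambda_\C = \min_{i \in \{1,\ldots,m\}} \frac{b_i+1}{a_i},
\]
and the corresponding formula for the real log canonical threshold, computed in the paragraph immediately preceding the lemma, reads
\[
\lambda_\R = \min_{\substack{i \in \{1,\ldots,m\} \\ E_i \cap X(\R) \neq \emptyset}} \frac{b_i+1}{a_i}.
\]
The validity of this second formula rests on Theorem~\ref{theorem:log-resolutions}: at every real point $P \in X(\R)$, the $\R$-normal crossing coordinates $u_1,\ldots,u_d$ can be chosen so that each $u_j$ with $a_j \neq 0$ locally defines exactly one of the global prime divisors $E_i$ passing through $P$, and conversely every such $E_i$ contributes one of the coordinates $u_j$ with $a_j \neq 0$.

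Given these two formulas in hand, the proof is essentially a two-line comparison. The first observation is that the index set for the minimum defining $\lambda_\R$ is a subset of the one defining $\lambda_\C$, which yields the unconditional inequality $\lambda_\R \geq \lambda_\C$. The second observation is the hypothesis itself: if every prime divisor $E_i$ of $\div(f \circ \rho)$ contains an $\R$-point of $X$, then the restriction $E_i \cap X(\R) \neq \emptyset$ holds automatically for every $i \in \{1,\ldots,m\}$. Consequently the two minima range over the same index set and must coincide, giving $\lambda_\R = \lambda_\C$.

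I do not expect any real obstacle here. The delicate analytic-over-$\R$ content, namely matching a complex simple normal crossing presentation with an $\R$-normal crossing presentation at each real point, has already been absorbed into Theorem~\ref{theorem:log-resolutions}, and the translation from the local definition of $\lambda_\R$ to the global formula above in terms of the divisors $E_i$ was carried out in the preceding subsection. The lemma itself is thus a purely combinatorial consequence of the setup, and the short proof should simply make explicit the comparison of index sets described above.
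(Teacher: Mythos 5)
Your proposal is correct and follows exactly the paper's own argument: invoke the global formulas for $\lambda_\C$ and $\lambda_\R$ as minima over prime divisors (the latter restricted to divisors meeting $X(\R)$, justified by Theorem~\ref{theorem:log-resolutions} and Section~\ref{subsection:relative-canonical-divisor}), and observe that the hypothesis makes the two index sets coincide. The only addition is your explicit note of the unconditional inequality $\lambda_\R \geq \lambda_\C$, which is a harmless and correct aside.
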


\begin{proof}
    By definition
    \[
    \lambda_\C = \min_{i \in \{1,\ldots,m\}} \frac{b_i + 1}{a_i}.
    \]
    Using Theorem~\ref{theorem:log-resolutions} and the discussion in Section~\ref{subsection:relative-canonical-divisor}, we get that
    \[
    \lambda_\R = \min_{\substack{i \in \{1,\ldots,m\} \\ E_i \cap X(\R) \neq \emptyset}} \frac{b_i + 1}{a_i},
    \]
    where $X(\R)$ is the set of $\R$-points of $X$, and the statement follows.
\end{proof}

We need a stronger condition to also infer the equality $m_\C = m_\R$.

\begin{lemma}\label{lemma:multiplicity}
    Let $f \in \R[x_1,\ldots,x_d]$ and $\rho$ a log resolution as in Theorem~\ref{theorem:log-resolutions}. We use the notation from Section~\ref{subsection:relative-canonical-divisor}. Suppose that for every $P \in X$ there exists an $\R$-point $Q \in X$ such that
    \[
        \{i \mid P \in E_i\} = \{i \mid Q \in E_i\}.
    \]
    Then $m_\R = m_\C$.
\end{lemma}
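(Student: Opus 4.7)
The plan is to reduce the claim to the two inequalities $m_\R \leq m_\C$ and $m_\C \leq m_\R$, after first arranging that $\lambda_\R = \lambda_\C$ so the two maxima in the definitions of $m_\R$ and $m_\C$ count indices with respect to the \emph{same} threshold. The key observation is that both numbers depend on $P \in X$ only through the combinatorial datum $I(P) := \{i \in \{1,\ldots,m\} \mid P \in E_i\}$, and the hypothesis is exactly the statement that every value of $I(P)$ that occurs at all is already attained at some $\R$-point.

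To get $\lambda_\R = \lambda_\C$, I would verify that the hypothesis of Lemma~\ref{lemma:multiplicity} implies the weaker hypothesis of Lemma~\ref{lemma:lambda}: given a prime divisor $E_j$ of $\div(f\circ\rho)$, pick any $P \in E_j$ and apply the hypothesis to obtain an $\R$-point $Q$ with $I(Q) = I(P)$; then $j \in I(Q)$, so $E_j \cap X(\R) \neq \emptyset$. Hence by Lemma~\ref{lemma:lambda}, $\lambda_\R = \lambda_\C$, and I will write $\lambda$ for their common value in the sequel. The inequality $m_\R \leq m_\C$ is then immediate: for every $Q \in X(\R)$ the set $\{i \mid \lambda = (b_i+1)/a_i,\ Q \in E_i\}$ is counted both in the supremum defining $m_\R$ (over $X(\R)$) and, since $Q \in X$, in the one defining $m_\C$, so its cardinality is bounded by $m_\C$.

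For the reverse inequality, I would pick a point $P \in X$ at which the maximum defining $m_\C$ is attained (the map $P \mapsto I(P)$ has finite image, so the max is realized), and then invoke the hypothesis to produce an $\R$-point $Q \in X$ with $I(Q) = I(P)$. Because $\lambda_\R = \lambda_\C = \lambda$, the sets
\[
\{i \mid \lambda_\C = (b_i+1)/a_i,\ P \in E_i\} \quad \text{and} \quad \{i \mid \lambda_\R = (b_i+1)/a_i,\ Q \in E_i\}
\]
have the same cardinality, so $m_\C \leq m_\R$. There is no real obstacle here beyond checking that the strengthened hypothesis indeed supplies an $\R$-point preserving the full index set $I(P)$, not just the condition $E_i \cap X(\R) \neq \emptyset$ used for $\lambda$; this is precisely why a stronger assumption than in Lemma~\ref{lemma:lambda} is needed, since $m_\C$ and $m_\R$ are sensitive to simultaneous incidence of several divisors at one point, whereas $\lambda$ depends only on the divisors individually.
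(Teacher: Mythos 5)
Your proposal is correct and follows essentially the same route as the paper: deduce the hypothesis of Lemma~\ref{lemma:lambda} from the stronger hypothesis here to obtain $\lambda_\R = \lambda_\C$, then match up the index sets $\{i \mid P \in E_i\}$ between arbitrary points and $\R$-points to equate the two maxima. Splitting the conclusion into the two inequalities $m_\R \leq m_\C$ and $m_\C \leq m_\R$ is just a cosmetic reorganization of the paper's direct comparison of the two defining expressions.
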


\begin{proof}
By definition
    \[
    m_\C =  \max_{P \in X} \bigg\vert \Set{  i \in \{1,\ldots,m\} | \lambda_\C = \frac{b_i + 1}{a_i}, P \in E_i } \bigg\vert.
    \]
Moreover, the assumption of the lemma implies, in particular, that every $E_i$ contains an $\R$-point of $X$. So, by Lemma~\ref{lemma:lambda}, $\lambda_\R = \lambda_\C$. Using this equality,  Theorem~\ref{theorem:log-resolutions} and the discussion in Section~\ref{subsection:relative-canonical-divisor}, we get
    \[
    m_\R =  \max_{Q \in X(\R) } \bigg\vert \Set{  i \in \{1,\ldots,m\} | \lambda_\C = \frac{b_i + 1}{a_i}, Q \in E_i } \bigg\vert.
    \]
The assumption of the lemma implies that, for each $P \in X$, there exists $Q \in X(\R)$ such that 
\[
\Set{  i \in \{1,\ldots,m\} | \lambda_\C = \frac{b_i + 1}{a_i}, P \in E_i }  = \Set{  i \in \{1,\ldots,m\} | \lambda_\C = \frac{b_i + 1}{a_i}, Q \in E_i },
\]
and the statement of the lemma follows.
\end{proof}

\section{Real log canonical thresholds of hyperplane arrangements}\label{section:main}

In this section, we will restrict ourselves to central hyperplane arrangements, that is, closed subschemes of an affine space defined by a product of linear forms rather than just any product of linear polynomials. However, this is no restriction, as we will see in Remark~\ref{remark:local-rlct}.

Before applying results from birational geometry concerning wonderful compactifications $\rho$ of hyperplane arrangements $f$ to our study of the multiplicities of (real) log canonical thresholds, we must first determine which prime divisors in the pullback of $f$ under $\rho$ are supported at common points. The following lemma provides a characterization in the context of the maximal building set $L(\mathcal{A})$.

\begin{lemma}\label{lemma:intersection-in-blowup}
    Let $f \in \C[x_1,\ldots,x_d]$ be a product of linear forms and $\mathcal{A}$ be the set-theoretical hyperplane arrangement defined by $f$ (see~\ref{wonderful-compactifications}).
    Let $W_0,\ldots,W_r \in L(\mathcal{A})$ be pairwise distinct and let $D_{W_0},\ldots,D_{W_r}$ be their corresponding prime divisors in the wonderful compactification of $\mathcal{A}$ with respect to the building set $L(\mathcal{A})$. Then, the following are equivalent:
    \begin{enumerate}
        \item $\bigcap_{i = 0}^r D_{W_i} \neq \emptyset$.
        \item There exists a permutation $\sigma$ of $\{0,\ldots,r\}$ such that $W_{\sigma(0)} \subseteq W_{\sigma(1)} \subseteq \ldots \subseteq W_{\sigma(r)}$.
    \end{enumerate}
    In particular, if (1) holds then $r \leq d-1$. Moreover, if $r = d-1$ then $\dim_\C W_{\sigma(i)} = i$ for all $i \in \{0,\ldots,d-1\}$.
\end{lemma}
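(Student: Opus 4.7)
The plan is to reduce the geometric statement to a combinatorial one about the building set $L(\mathcal{A})$ by invoking the nested-set description of strata in a wonderful compactification, as developed in~\cite{wonderful-compactifications}. A subset $\mathcal{N}\subseteq L(\mathcal{A})$ is called $L(\mathcal{A})$-\emph{nested} if, for every antichain $V_1,\ldots,V_s$ in $\mathcal{N}$ with $s\ge 2$, the intersection $V_1\cap\ldots\cap V_s$ does not lie in $L(\mathcal{A})$. The key fact I would import from that theory is that $\bigcap_{i=0}^r D_{W_i}\neq\emptyset$ if and only if $\{W_0,\ldots,W_r\}$ is $L(\mathcal{A})$-nested.

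The main observation, and the crux of the proof, is that for the maximal building set $L(\mathcal{A})$ the nested subsets are precisely the chains under inclusion. The reason is that $L(\mathcal{A})$ is literally the intersection lattice of $\mathcal{A}$: for a central arrangement, the intersection of any collection of elements of $L(\mathcal{A})$ is again a proper linear subspace, hence itself an element of $L(\mathcal{A})$. Consequently, if $V_1,V_2\in L(\mathcal{A})$ are incomparable, then $V_1\cap V_2\in L(\mathcal{A})$, violating the nested condition. So a nested set contains no antichain of size $\ge 2$; conversely, a chain vacuously satisfies the nested condition.

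Together, these two facts yield (1)$\iff$(2) at once. For the remaining statements, suppose (2) holds and note that pairwise distinctness of the $W_i$ upgrades the chain to strict inclusions $W_{\sigma(0)}\subsetneq\ldots\subsetneq W_{\sigma(r)}$, so $\dim_\C W_{\sigma(0)}<\ldots<\dim_\C W_{\sigma(r)}$. Since $L(\mathcal{A})$ excludes the ambient affine space, $\dim_\C W_{\sigma(r)}\le d-1$, which forces $r\le d-1$; equality $r=d-1$ pins down $\dim_\C W_{\sigma(i)}=i$ for all $i\in\{0,\ldots,d-1\}$.

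The main obstacle is to set up and invoke the nested-set formalism in a way that cleanly matches the lemma's hypotheses, and to ensure that the non-emptiness criterion from~\cite{wonderful-compactifications} really applies to the iterated blow-up presented here; once that is done, the rest is lattice theory. A self-contained alternative would be to track $\bigcap_{i=0}^r D_{W_i}^{(j)}$ inductively through the blow-ups defining the wonderful compactification, which is essentially a specialisation of the nested-set theorem to the case of hyperplane arrangements.
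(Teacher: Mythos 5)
Your proposal is correct, and it takes a genuinely different route from the paper. The paper proves the lemma directly by tracking proper transforms and exceptional divisors through the iterated blow-up: the contrapositive of $(1)\Rightarrow(2)$ is handled by observing that for incomparable $W_i,W_j$ the blow-up along $V = W_i\cap W_j$ separates the divisors coming from $W_i$ and $W_j$, and $(2)\Rightarrow(1)$ is an induction on the steps of the blow-up sequence showing that a common intersection point survives each step. Your approach instead imports the nested-set characterization of non-empty boundary intersections in the wonderful compactification (Li's paper, and De~Concini--Procesi in the arrangement case): $\bigcap_i D_{W_i}\neq\emptyset$ iff $\{W_0,\ldots,W_r\}$ is $L(\mathcal{A})$-nested. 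Your key combinatorial observation --- that for the maximal building set $L(\mathcal{A})$ the nested sets are precisely the chains, because $L(\mathcal{A})$ is closed under intersections for a central arrangement and hence any incomparable pair violates the nested condition --- is correct and immediately yields the equivalence. Note that the full nested-set definition carries a transversality requirement, which you dropped; this is harmless here, since for the maximal building set the clause ``the intersection is not in $\mathcal{G}$'' already fails for every antichain, which is all you use. The dimension count for the ``in particular'' assertions is also fine. In short: the paper's argument is self-contained and elementary; yours is shorter and more conceptual at the cost of citing a structural theorem --- and the ``self-contained alternative'' you flag at the end is essentially the paper's own proof.
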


\begin{proof}
    If (2) does not hold then there exist $i,j \in \{0,\ldots,r\}$ such that neither $W_i \not\subseteq W_j$ nor $W_j \not\subseteq W_i$. Consequently, $V = W_i \cap W_j$ is a transversal intersection and the exceptional divisors of $W_i$ and $W_j$ are separated after the blow up along $V$. In particular, (1) does not hold.

    Suppose that (2) holds and assume without loss of generality that $r = d-1$ and that $W_0 \subseteq W_1 \subseteq \ldots \subseteq W_{d-1}$. Write $L(\mathcal{A}) = \{U_1 \leq \ldots \leq U_N\}$, where $\leq$ is a total order on $L(\mathcal{A})$ compatible with set-theoretical inclusion. We do induction on $j \in \{1,\ldots,N\}$, showing that in each step of the sequence of blow ups along the $U_j$, the proper transforms respectively the exceptional divisors given by the $W_i$ have non-empty intersection.

    As basis of the induction, the initial intersection $\bigcap_{i = 1}^r W_i$ is non-empty because the $W_i$ are linear subspaces of $\C^d$. Let $j \in \{1,\ldots,N-1\}$ and assume that, after the blow up along (the proper transform of) $U_j$, the proper transforms respectively exceptional divisors of the $W_i$ have non-empty intersection. 
    Also, suppose that $W_0,\ldots, W_k$ are among the $U_1,\ldots,U_j$, and $W_{k+1},\ldots,W_{d-1}$ are not.

    Then $W_k \cap U_{j+1} \subsetneqq U_{j+1}$ and, hence, the blow up along this intersection has already been performed before the $(j+1)$-st blow up step. In particular, the divisors induced by $W_k$ and $U_{j+1}$, respectively, after the $j$-th blow up are disjoint, unless $W_k \subseteq U_{j+1}$. If they are indeed disjoint, then so is the intersection of the induced divisors of the $W_i$, and so an intersection point outside the proper transform of $U_{j+1}$ is preserved also after the step $j+1$. If $W_k \subseteq U_{j+1}$ then $\dim U_{j+1} = k+1$ and there are two cases. 

    The first case arises when $U_{j+1} \not\subseteq W_{k+1}$. Then $U_{j+1} \cap W_{k+1} \subsetneqq U_{j+1}$ and the same argument as in the previous paragraph applies by replacing $W_k$ with $W_{k+1}$. In the second case, when $U_{j+1} \subseteq W_{k+1}$, we deduce $U_{j+1} = W_{k+1}$ for dimension reasons. Clearly, every intersection point is preserved in this case. 
\end{proof}

We are now prepared to present a combinatorial formula for the pair $\lct(f)$ associated with a (not necessarily reduced) hyperplane arrangement $f$ over $\C$. Teitler previously provided the description for $\lambda_\C$ in~\cite{Teitler08}, and Mustaţă~\cite{Mustata} demonstrated it earlier in the case of a reduced hyperplane arrangement.

\begin{theorem}\label{theorem:lct}
    Let $f = L_1^{s_1} \cdots L_n^{s_n}$, where each $L_i \in \C[x_1,\ldots,x_d]$ is a linear form and the $s_i$ are positive integers. Let $\mathcal{G}$ be a building set of the hyperplane arrangement $\mathcal{A}$ defined by $f$ and let $\lct(f) = (\lambda_\C,m_\C)$ be the log canonical threshold of $f$ and its multiplicity. Then 
    \[
    \lambda_\C = \min_{W \in \mathcal{G}} \ddfrac{\text{codim}(W)}{s(W)}  \text{ and } m_\C = \max_{{ \substack{W_0\subseteq \ldots \subseteq W_{d-1}\\ W_i \in L(\mathcal{A}) } }} \bigg\vert \Set{  i \in \{0,\ldots,d-1\} | \lambda= \frac{\text{codim}(W_i)}{s(W_i)}} \bigg\vert,
    \]
    where, for $W \in L(\mathcal{A})$,
    \[
    s(W) = \sum_{{\substack{j = 1 \\ W \subseteq H_j}}}^n s_j.
    \]
\end{theorem}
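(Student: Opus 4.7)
The plan is to take the wonderful compactification $\rho: X \to \A^d_\C$ of $\mathcal{A}$ with respect to a building set (with respect to $\mathcal{G}$ for $\lambda_\C$, and with respect to $L(\mathcal{A})$ for $m_\C$) as log resolution, and to read off $\lambda_\C$ and $m_\C$ from the formulas in Section~\ref{subsection:log-canonical-threshold}. By Section~\ref{wonderful-compactifications}, the prime divisors of $\rho^*\div(f) + \exc(\rho)$ are precisely the $D_W$, one for each $W$ in the chosen building set. The theorem then reduces to showing that in the decompositions $\rho^*\div(f) = \sum_W a_W D_W$ and $K_\rho = \sum_W b_W D_W$ one has $a_W = s(W)$ and $b_W = \codim(W) - 1$.

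The identity $a_W = s(W)$ I would prove by induction along the blow-up sequence realising $\rho$, tracking the pullback of each linear form $L_j$. At each step the pullback picks up one factor of the newly-created exceptional divisor precisely when the centre of the blow-up is contained in $H_j$, while the strict transform of $L_j$ eventually becomes the prime divisor $D_{H_j}$ (every hyperplane of $\mathcal{A}$ automatically belongs to any building set, as is seen by taking $C = H_j$ in the definition in Section~\ref{wonderful-compactifications}). This yields $\rho^* L_j = \sum_{W \subseteq H_j} D_W$; summing over $j$ with weights $s_j$ gives $a_W = s(W)$. For $b_W$, I would invoke the standard formula that blowing up a smooth centre of codimension $c$ contributes the exceptional divisor with coefficient $c-1$ to the relative canonical divisor, combined with the fact that proper transforms preserve codimensions throughout the blow-up sequence. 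Combining the two identities gives $\frac{b_W+1}{a_W} = \frac{\codim(W)}{s(W)}$, from which the formula for $\lambda_\C$ is immediate; its independence from the choice of building set is automatic from the birational invariance of $\lambda_\C$.

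For the multiplicity I would specialise to $\mathcal{G} = L(\mathcal{A})$ and apply Lemma~\ref{lemma:intersection-in-blowup}. By the definition in Section~\ref{subsection:log-canonical-threshold}, $m_\C$ is the maximum over $P \in X$ of the number of divisors $D_W$ through $P$ attaining $\codim(W)/s(W) = \lambda_\C$. The lemma identifies those collections with chains $W_0 \subsetneq \ldots \subsetneq W_r$ in $L(\mathcal{A})$ (with $r \leq d-1$), from which the displayed formula follows directly. The main obstacle I anticipate is the inductive bookkeeping behind $a_W = s(W)$: one must confirm that the multiplicity contributed to $\rho^* L_j$ by each blow-up step is exactly one and is not inflated by earlier blow-ups. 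This rests on transversality properties intrinsic to the De Concini--Procesi construction, namely that at every stage the proper transform of $H_j$ meets the current blow-up centre transversally whenever that centre is not contained in $H_j$.
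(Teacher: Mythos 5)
Your plan is essentially the paper's proof: take the wonderful compactification as log resolution, identify the coefficients of $\rho^*\div(f)$ and $K_\rho$ as $s(W)$ and $\codim(W)-1$ respectively, read off $\lambda_\C$ from the definition, and then specialise to $\mathcal{G}=L(\mathcal{A})$ and invoke Lemma~\ref{lemma:intersection-in-blowup} to identify the maximal collections of divisors meeting at a point and hence obtain $m_\C$.

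The one substantive difference is how you justify the coefficient identities $a_W = s(W)$ and $b_W = \codim(W)-1$. The paper simply cites Teitler~\cite[Lemma~2.1]{Teitler08} for both; you instead sketch a direct inductive derivation along the blow-up sequence. Your sketch is sound in outline, but the place you flag as the ``main obstacle'' is precisely where the remaining work sits, and your proposed fix (``the proper transform of $H_j$ meets the current blow-up centre transversally whenever that centre is not contained in $H_j$'') only guards against picking up spurious multiplicity from the strict transform of $H_j$. You also need the complementary fact that the new blow-up centre is never contained in the strict transform of any \emph{earlier} exceptional divisor $E_{\ell'}$ (this follows because the ordering on $\mathcal{G}$ refines inclusion, so $U_{\ell+1}\not\subseteq U_{\ell'}$ for $\ell'\le\ell$, hence $\tilde U_{\ell+1}\not\subseteq \tilde E_{\ell'}$); without that, the coefficient of $D_W$ in both $\rho^*L_j$ and $K_\rho$ could be inflated. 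The same observation is what makes ``proper transforms preserve codimensions'' true. Once you state and use that non-containment explicitly, your induction reproduces Teitler's lemma and the rest of the argument — the reduction of $m_\C$ to maximal chains via Lemma~\ref{lemma:intersection-in-blowup} — matches the paper exactly. Your side remark that every $H_j$ lies in every building set (take $C=H_j$ in the defining condition) is correct and is indeed the reason the strict transform of each hyperplane is itself one of the $D_W$.
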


\begin{proof}
    Using the notation from~\ref{wonderful-compactifications}, and using~\cite[Lemma 2.1]{Teitler08}, the pullback divisor of $f$ under the wonderful compactification $\rho:U \to \C^d$ with respect to $\mathcal{G}$ and the divisor defined by $\det \text{Jac}(\rho)$ are
    \[
    \sum_{W \in \mathcal{G}} s(W) D_W
    \]
    and
    \[
    \sum_{W \in \mathcal{G}} (\codim(W)-1)D_W,
    \]
    respectively. So the formula for $\lambda_\C$ follows by definition of the log canonical threshold.

    Let $P \in U$. Then the relative canonical divisor and the pullback locally at $P$ are of the form
    \[
    \sum_{{ \substack{W \in \mathcal G \\ P \in D_W}}} (\codim(W)-1)D_W \text{ and } \sum_{{ \substack{W \in \mathcal G \\ P \in D_W}}} s(W) D_W.
    \]
    If $\mathcal{G} = L(\mathcal{A})$ then, by Lemma~\ref{lemma:intersection-in-blowup}, the maximal sets of elements $W \in L(\mathcal{A})$ whose induced divisors $D_W$ intersect in a point $P \in U$ are the sets $\{W_0 \subseteq \ldots \subseteq W_{d-1}\}$. So the formula for $m_\C$ follows by definition of the multiplicity.
\end{proof}

We saw in Lemmas~\ref{lemma:lambda} and~\ref{lemma:multiplicity} that the only obstacle for the equality $\rlct(f) = \lct(f)$ is that some irreducible components of $f \circ \rho$, where $\rho$ is a log resolution, might have common complex points but no common real points. We will now show that this is not true when $f$ defines a real hyperplane arrangement.

\begin{theorem}\label{theorem:lct=rlct_for_hyperplane}
    Let $f \in \R[x_1,\ldots,x_d]$ be a product of linear forms with real coefficients. Then $\rlct(f) = \lct(f)$.
\end{theorem}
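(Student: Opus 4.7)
The plan is to apply Lemma~\ref{lemma:multiplicity} (which subsumes Lemma~\ref{lemma:lambda}) to the wonderful compactification $\rho: X \to \A^d_\C$ with respect to the maximal building set $L(\mathcal{A})$. By Fact~\ref{fact:log-resolutions}(4), we may take $\rho$ defined over $\R$, so that all prime divisors $\{D_W\}_{W \in L(\mathcal{A})}$ of $\div(f \circ \rho)$ are defined over $\R$. It then suffices to verify that every incidence set $\{W \in L(\mathcal{A}) : P \in D_W\}$ realised at some $P \in X$ is also realised at some $Q \in X(\R)$.

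Fix $P \in X$. By Lemma~\ref{lemma:intersection-in-blowup}, its incidence set is a chain $W_0 \subsetneq W_1 \subsetneq \ldots \subsetneq W_r$ in $L(\mathcal{A})$, consisting of real linear subspaces. To construct $Q$, I would first choose a real point $p \in W_0(\R)$ that avoids the (finitely many) proper real subspaces $W_0 \cap W'$ for $W' \in L(\mathcal{A})$ with $W' \not\supseteq W_0$ (a vacuous condition when $W_0 = \{0\}$); since $\rho(D_{W'}) \subseteq W'$, this choice automatically rules out $Q \in D_{W'}$ for every such $W'$, regardless of how the lift is carried out. Then I would lift $p$ inductively through the sequence of blowups $U_1, \ldots, U_N$ defining $\rho$: at each step where the current lift lies on (the proper transform of) $U_j$, the exceptional fiber is a real projective bundle, and one picks a real direction therein that puts the lift on $D_{U_j}$ when $U_j \in \{W_0,\ldots,W_r\}$, and keeps it off $D_{W'}$ for $W' \in L(\mathcal{A})$ outside the chain.

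Such real directions exist at every stage because the bad loci in each fiber form a finite union of proper real subvarieties, whose complement has Zariski dense real points by the density criterion of Section~\ref{real-and-complex-varieties}. The main obstacle is the coherent bookkeeping required to guarantee that, after all $N$ blowups, the incidence set of $Q$ equals exactly $\{W_0, \ldots, W_r\}$ (neither losing a chain element by escaping some $D_{W_i}$ prematurely, nor acquiring an extra $D_{W'}$ with $W_0 \subseteq W'$ outside the chain). This is organised along the same case analysis ($W_k \subseteq U_{j+1}$ vs.\ $W_k \not\subseteq U_{j+1}$) used in the proof of Lemma~\ref{lemma:intersection-in-blowup}, the only new ingredient being density of real points in each real projective exceptional fiber. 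Once this is verified, the hypothesis of Lemma~\ref{lemma:multiplicity} is satisfied for $\rho$ and the theorem $\rlct(f) = \lct(f)$ follows.
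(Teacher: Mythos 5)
Your framework is the same as the paper's: use Lemma~\ref{lemma:multiplicity} (which subsumes Lemma~\ref{lemma:lambda}), take the wonderful compactification with respect to the maximal building set $L(\mathcal{A})$, and invoke Lemma~\ref{lemma:intersection-in-blowup} to identify the incidence set of $P$ as a chain. Where you diverge is in how the real point $Q$ with the same incidence set is produced, and there your argument has a genuine gap that the paper's construction is specifically designed to avoid.

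You propose to lift a generic real $p \in W_0(\R)$ step by step through all $N$ blowups $U_1, \ldots, U_N$, choosing real directions in each exceptional fiber, and you flag the ``coherent bookkeeping'' as the missing piece. This is more than bookkeeping. Two issues arise. First, the requirement at each stage is not merely to stay \emph{off} the proper transforms of the $W'$ outside the chain, but simultaneously to stay \emph{on} the proper transforms of all $W_i$ in the chain that have not yet been blown up; your description mentions only the open condition. Second, and more seriously, the two conditions can be in direct tension: if $W_1 \subsetneq W'$ with $W' \notin \{W_0,\ldots,W_r\}$, then after the blowup along $W_0$ the lift is forced to lie on the proper transform of $W'$ (because $\P(W_1/W_0) \subseteq \P(W'/W_0)$). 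The lift can only escape $D_{W'}$ at the later blowup along $W_1$, and showing that a real direction achieving this always exists --- that is, showing that the intersection of proper transforms of future chain elements is never contained in the proper transform of a non-chain $W'$ at that stage --- is precisely the nontrivial core of the theorem. Invoking density of real points in the fiber presupposes the bad loci are proper subvarieties of the constrained set, which is the unproven claim.

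The paper sidesteps this by two moves you do not make. First, it restricts to the open set $\Omega = X \setminus \bigcup_{i > r} E_i$, which kills all the unwanted divisors once and for all and, since the omitted blowups become isomorphisms there, lets one replace $\rho$ by the much simpler map $\sigma$ that blows up only along the nested chain $U_1 \subseteq \ldots \subseteq U_r$. Second, it proves by a perpendicularity induction that the pullback under $\sigma$ is still a hyperplane arrangement in a suitable chart, so that $V_1 \cap \ldots \cap V_r$ is a real \emph{linear} variety; density of real points is then immediate and needs no genericity analysis at all. Your lifting strategy can probably be pushed through, but it would essentially reprove this linearity fact locally at each fiber, and as written the proposal assumes rather than establishes the crucial properness of the bad loci.
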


\begin{proof}
    Let $\rho: X \to \A^d_\C$ be the wonderful compactification with respect to the building set $\mathcal{L}$ that consists of all intersections of hyperplanes defined by irreducible factors of $f$, that is, the maximal building set with respect to set-theoretic inclusion. We prove the premise of Lemma~\ref{lemma:multiplicity} in this situation, namely that for every $P \in X$ there exists an $\R$-point $Q \in X$ such that
    \[
        \{i \mid P \in E_i\} = \{i \mid Q \in E_i\}.
    \] 
    This also implies the premise of Lemma~\ref{lemma:lambda}. 

    Let $P \in X$ and, without loss of generality, let $E_1,\ldots,E_r$ be the prime divisors of $\div(f \circ \rho)$ that contain $P$. Now, Lemma~\ref{lemma:intersection-in-blowup} implies that the $E_i$ are prime divisors induced by an inclusion chain of elements in $\mathcal{L}$, say $E_i = D_{U_i}$, where $U_i \in \mathcal{L}$ with $U_1 \subseteq U_2 \subseteq \ldots \subseteq U_r$. As $P \notin E_i$ for $i \geq r+1$, we may consider the problem on the open subset $ \Omega = X \setminus \bigcup_{i = r+1}^m E_i$ of $X$. As a blow-up is an isomorphism away from its center, the closed subvariety $\Omega \cap \bigcap_{i = 1}^r E_i \subseteq \Omega$ is birationally equivalent over $\R$ to the pullback $V$ of $U_r$ under the map $\sigma: Y \to \A^d_\C$ that is the chain of blow-ups along $U_1,\ldots,U_r$, in this order. In particular, $\sigma$ is defined over $\R$ as it is a sequence  of blow-ups whose centers are real linear spaces, and it suffices to prove the premise of Lemma~\ref{lemma:multiplicity} for the point in $V$ corresponding to $P$. We show the stronger statement that $V = V_1 \cup \ldots \cup V_r$ is actually an arrangement of hyperplanes, where $V_i$ is the prime divisor of $V$ corresponding to $U_i$. So, the $V_i$ are defined over $\R$ and, hence, $V_1 \cap \ldots \cap V_r$ is a linear variety defined over $\R$ and, therefore, has dense real points.

    To prove that $V$ can be defined by a product of linear forms, note that, if $B \subseteq \A^d_\C$ is any linear subspace that is a union of lines perpendicular to $U_i$ then, by definition, the proper transform of $B$ under the blow-up along $U_i$ is linear and the exceptional divisor is a union of lines perpendicular to this proper transform. This inductively shows that every $V_i$ is linear as follows: The basis of the induction is that $U_1,\ldots,U_r$ are linear spaces. For the induction step, suppose that after the blow-up along $U_i$, all the proper transforms and exceptional divisors corresponding to $U_1,\ldots,U_r$ are still linear. Moreover, we know by the above discussion that the exceptional divisors coming from $U_1,\ldots,U_i$ consist of lines perpendicular to the proper transform of $U_{i+1}$. Furthermore, as the proper transform of $U_{i+1}$ is contained in the proper transforms of $U_{i+2},\ldots,U_r$, the latter also consist of lines perpendicular to the former. So, blowing up along the proper transform of $U_{i+1}$ again yields a central hyperplane arrangement which, up to a linear translation, is defined by a product of linear forms. The process terminates at $i = r$ and the statement follows. 
\end{proof}

The goal of this paper was to derive formulas for the real log canonical threshold and its multiplicity for a singularity resembling a (not necessarily reduced) union of hyperplanes, based solely on the combinatorial data of the hyperplane arrangement. This is achieved in Corollary~\ref{corollary:rlct}. It is worth noting that the formulas can be implemented in any computer program capable of performing linear algebra calculations, see Section~\ref{section:code}.

\begin{corollary}\label{corollary:rlct}
    Let $f = L_1^{s_1} \cdots L_n^{s_n}$, where each $L_i \in \R[x_1,\ldots,x_d]$ is a linear form and the $s_i$ are positive integers. Let $\mathcal{G}$ be a building set of the (set-theoretic) hyperplane arrangement $\mathcal{A}$ defined by $f$ and let $\rlct(f) = (\lambda_\R,m_\R)$ be the real log canonical threshold of $f$ and its multiplicity. Denote by $L(\mathcal{A})$ the set of all intersections ($\neq \R^d$) of elements of $\mathcal{A}$. Then 
    \[
    \lambda_\R = \min_{W \in \mathcal{G}} \ddfrac{\text{codim}(W)}{s(W)} = \min_{W \in L(\mathcal{A})} \ddfrac{\text{codim}(W)}{s(W)}
    \]
    and
    \[
    m_\R = \max_{{ \substack{W_0\subseteq \ldots \subseteq W_{d-1}\\ W_i \in L(\mathcal{A}) } }} \bigg\vert \Set{  i \in \{0,\ldots,d-1\} | \lambda_\R = \frac{\text{codim}(W_i)}{s(W_i)}} \bigg\vert,
    \]
    where, for $W \in L(\mathcal{A})$,
    \[
    s(W) = \sum_{{\substack{j = 1 \\ W \subseteq H_j}}}^n s_j.
    \]
\end{corollary}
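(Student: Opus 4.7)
The plan is to obtain Corollary~\ref{corollary:rlct} by combining the two main ingredients established just above: Theorem~\ref{theorem:lct=rlct_for_hyperplane}, which asserts that $\rlct(f)=\lct(f)$ for any real hyperplane arrangement, and Theorem~\ref{theorem:lct}, which gives the combinatorial description of $\lct(f)$ for a (not necessarily reduced) hyperplane arrangement over $\C$.

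First I would observe that, since every $L_i$ has real coefficients, the same linear forms also define a hyperplane arrangement over $\C$, and the set-theoretic arrangement $\mathcal{A}$, its intersection lattice $L(\mathcal{A})$, and the codimensions $\codim(W)$ are the same whether interpreted over $\R$ or over $\C$. This is because the complexification of a real linear subspace has the same dimension, and for central arrangements any intersection of real linear subspaces is a real linear subspace whose complexification is the intersection of the individual complexifications. In particular, the prescribed building set $\mathcal{G}$ is also a building set of $\mathcal{A}$ viewed over $\C$, and the weight function $s(W)$ is unchanged.

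Then I would apply Theorem~\ref{theorem:lct=rlct_for_hyperplane} to conclude $\lambda_\R=\lambda_\C$ and $m_\R=m_\C$. Substituting into these equalities the formulas from Theorem~\ref{theorem:lct} for $\lambda_\C$ (valid for any building set, in particular both $\mathcal{G}$ and $L(\mathcal{A})$) and for $m_\C$ (stated in terms of inclusion chains in $L(\mathcal{A})$) yields exactly the displayed expressions for $\lambda_\R$ and $m_\R$, including the equality of the two minima for $\lambda_\R$.

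I do not foresee a substantive obstacle: the real work—reducing the real invariants of a hyperplane arrangement to their complex counterparts—has already been accomplished in Theorem~\ref{theorem:lct=rlct_for_hyperplane} via the wonderful compactification argument together with Lemmas~\ref{lemma:lambda} and~\ref{lemma:multiplicity}, and Theorem~\ref{theorem:lct} handles the combinatorial bookkeeping on the complex side. All that remains is the elementary linear-algebra check that the combinatorial data of $\mathcal{A}$ coincide over $\R$ and $\C$, which is what the opening observation records.
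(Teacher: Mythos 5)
Your proposal is correct and takes essentially the same approach as the paper, which also derives the corollary by combining Theorem~\ref{theorem:lct=rlct_for_hyperplane} with Theorem~\ref{theorem:lct} (applied to both $\mathcal{G}$ and $L(\mathcal{A})$ to get the two formulas for $\lambda_\R$). The only difference is that you explicitly spell out the elementary observation that $\mathcal{A}$, $L(\mathcal{A})$, codimensions, and $s(W)$ are the same over $\R$ and $\C$, which the paper leaves implicit.
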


\begin{proof}
   This follows immediately from Theorem~\ref{theorem:lct} and Theorem~\ref{theorem:lct=rlct_for_hyperplane}, where the second formula for $\lambda$ is the direct application of Theorem~\ref{theorem:lct} to the building set $L(\mathcal{A})$.
\end{proof}

We end this section with a remark that justifies the restriction to central hyperplane arrangements and shows that this is no loss of generality.

\begin{remark}\label{remark:local-rlct}
It is shown in~\cite[Lemma 7.2]{tubes} how $\rlct(f)$ can be described in a local manner. Namely, for an Euclidean open subset $\Omega \subseteq \A^d_\R$ the authors define $\rlct_\Omega(f)$ and show that, for every $x \in \A^d_\R$ there exists an open neighbourhood $\Omega_x$ of $x$ such that $\rlct_{\Omega_x}(f) = \rlct_\Omega(f)$ for every open neighbourhood $\Omega \subseteq \Omega_x$. One can then set $\rlct_x(f) = \rlct_{\Omega_x}(f)$ to be the \textit{local real log canonical threshold} of $f$ at $x$.

    For real numbers $\lambda,\lambda',m,m'$, we define
\[
(\lambda,m) < (\lambda',m') \text{ if and only if } (\lambda < \lambda' \text{ or } (\lambda = \lambda' \text{ and } m > m') ).
\]
With this order, \cite[Lemma 7.2]{tubes} shows that $\rlct(f) = \min_{x \in \A^d_\R} \rlct_x(f)$ for a polynomial $f \in \R[x_1,\ldots,x_d]$.

If $f$ defines a hyperplane arrangement, this statement can even be interpreted without any further knowledge of the definition of $\rlct_x(f)$. Namely, for $x \in \A^d_\R$, let $f_x \in \R[x_1,\ldots,x_d]$ be the product of all irreducible factors of $f$ that vanish at $x$. In other words, $f_x$ defines the arrangement of all (not necessarily reduced) hyperplanes that appear in the hyperplane arrangement defined by $f$ and pass through $x$. In particular, $f_x$ defines a central hyperplane arrangement. In this case, \cite[Lemma 7.2]{tubes} reduces to
\[
\rlct(f) = \min_{x \in \A^d_\R} \rlct(f_x).
\]
Furthermore, this minimum can be rewritten as a minimum over just finitely many points $x_1,\ldots,x_r$ by only considering $f_x$ that define hyperplane arrangements maximal with respect to set theoretic inclusion among all the hyperplane arrangements defined by the $f_x$ and, for each of these maximal hyperplane arrangements, considering a fixed point $x_i$ that lies on all of its irreducible components.
This shows that the restriction to central hyperplane arrangements means no loss of generality.
\end{remark}

\section{Examples}\label{section:examples}

In this section, we apply our formulas for the (real) log canonical threshold and its multiplicity to illustrating examples of hyperplane singularities. Moreover, we examine several instances showing that Theorem~\ref{theorem:lct=rlct_for_hyperplane}, in general, fails but might still hold under special circumstances other than hyperplane arrangements. 
Our first example deals with line arrangements in $\R^2$ which resembles the case of two-parameter models in statistics.

\begin{example}\label{example:application-formula-two-dimensional}
     We can deduce from Corollary~\ref{corollary:rlct} a compact formula for the rlct and its multiplicity of central line arrangements in $\R^2$. Let $L_1,\ldots, L_n \in \R[x,y]$ be linear forms, $s_1 \leq \ldots \leq s_n$ positive integers, $f = L_1^{s_1} \cdots L_n^{s_n}$, and $\rlct(f)= (\lambda,m)$. Denote by $\ell_i$ the line defined by $L_i$ and by $\mathbf{0}$ the set containing the origin of $\R^2$.

     The intersection lattice is $L(\mathcal{A}) = \{\ell_i \mid i \in [n]\} \cup \{\mathbf{0}\}$. Moreover, $\codim(\ell_i) = 1$ and $\codim(\mathbf{0}) = 2$, and we get the following formulas for the multiplicities:
     \[
     s(\ell_i) = s_i \text{ and } s(\mathbf{0}) = \sum_{i = 1}^n s_i.
     \]
     Using the building set $\mathcal{G} = L(\mathcal{A})$, a direct application of Corollary~\ref{corollary:rlct} gives
     \[
     \lambda = \min \Set{ \frac{1}{s_n} , \frac{2}{\sum s_i} } \text{ and } m = \bigg\vert \Set{   \frac{1}{s_n} , \frac{2}{\sum s_i} } \bigg\vert.
     \]
     Finally,
     \begin{align*}
       \lambda = \begin{cases}
         \frac{1}{s_n} \text{ if } \sum_{i = 1}^{n-1} s_i \leq s_n \\
         \frac{2}{\sum_{i=1}^n s_i} \text{ else}
     \end{cases}  
     \end{align*}
     and 
     \begin{align*}
       m = \begin{cases}
         2 \text{ if } \sum_{i = 1}^{n-1} s_i = s_n \\
         1 \text{ else}.
     \end{cases}  
     \end{align*}

\end{example}

\begin{example}\label{example:recover-tubes}
The examples from~\cite[Example 3.1]{tubes} are easily recovered using the formulas from~\ref{example:application-formula-two-dimensional}.
\begin{itemize}
    \item[(a)]  Let $f(x,y) = x = y^0x^1$. Then $s_1 = 0$ and $s_2 = 1$. Therefore $\rlct(f) = (\lambda,m) = (1,1)$.
    \item[(b)] For $f(x,y) = xy$, we have $s_1 = s_2 = 1$ and so $\rlct(f) =  (1,2)$.
    \item[(c)] If $f(x,y) = x^2y^3$ then $s_1 = 2$ and $s_2 = 3$. Therefore, our formulas give $\rlct(f) = (1/3,1)$.
    \item[(d)] Finally, for $f(x,y) = xy(x+y)(x-y)$ we get $s_i = 1$ for $i \in \{1,2,3,4\}$. Therefore, $\lambda = 2/4 = 1/2$ and $m = 1$. 
\end{itemize}
\end{example}

\begin{example}
    Example~\ref{example:application-formula-two-dimensional} shows, in particular, that an arrangement $f$ of $n \geq 3$ distinct reduced lines in $\R^2$ satisfies $\rlct(f) = (2/n,1)$.
\end{example}

\begin{figure}[H]
    \centering
    \includegraphics[width=0.9\linewidth]{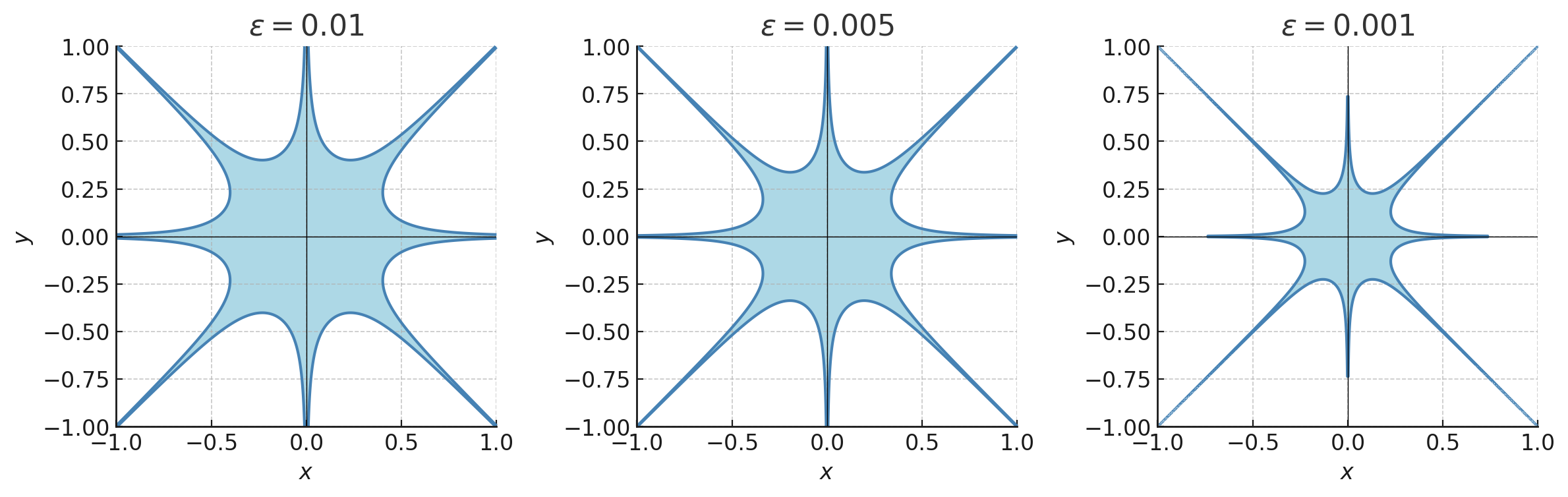}
    \caption{The area of the set of all $(x,y) \in [-1,1]^2$ with $|x^3y - xy^3| \leq \varepsilon$ goes to $0$ like $C \varepsilon^{1/2}$, for some constant $C$, because $\rlct(f) = (1/2, 1)$, see Example~\ref{example:recover-tubes}(d).}
    \label{fig:rlct}
\end{figure}

A further domain of application for the results of this paper is the asymptotic behaviour for integrals that would otherwise be hard to compute. Let $W$ be a full-dimensional compact subset of $\R^d$, $\varphi: W \to \R$ an analytic function that is strictly positive on $W$, $f: W \to \R$ an analytic function who has a zero in the interior of $W$, and let $dw$ denote the standard Lebesgue measure on $W$.  According  to~\cite[Definition 3.2]{tubes}, see also \cite[Section 4]{WatanabeNeurComp2001}, \cite[Section 7.1]{Watanabe} and \cite[Section 4.1]{Lin-dissertation}, for small \(\varepsilon > 0\), the \textit{volume function}
\begin{equation}\label{equation:integral}
V(\varepsilon) = \int_{|f(w)| \leq \varepsilon} \varphi(w) \, dw
\end{equation}
is asymptotically \( C \varepsilon^\lambda (-\ln \varepsilon)^{m-1} \) for some constant \( C \), where $\rlct_W(f) = (\lambda,m) $. When $f$ is a function whose real locus looks like a hyperplane arrangement at every interior point of $W$, we can use the formulas from Corollary~\ref{corollary:rlct} to describe the asymptotic behaviour of this integral, see Figure~\ref{fig:rlct} and Figure~\ref{fig:3D-surface}. We show how such a computation can be done for an explicit example in~$\R^3$.

\begin{example}\label{example:four-planes}
    Let $f$ be a function that defines a union of four planes in $\R^3$ that meet in a common point in the interior of the full-dimensional compact set $W \subseteq \R^3$. For instance, suppose that $W$ contains the origin and $f(x,y,z) = x y^2 z^2 (x+y+z)$.

\begin{figure}[H]
    \centering
    \includegraphics[width=0.6\linewidth]{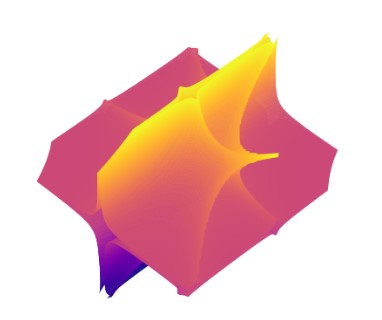}
    \caption{The surface defined by $|xy^2z^2 (x+y+z)| = \varepsilon$ in a compact neighbourhood of the origin in $\R^3$, for small $\varepsilon >0$. It consists of sixteen connected components. As $\rlct(xy^2z^2 (x+y+z)) = (1/2,3)$, see Example~\ref{example:four-planes}, the area between these components asymptotically equals $C\varepsilon^{1/2}(-\ln \varepsilon)^2$, for some constant~$C$.}
    \label{fig:3D-surface}
\end{figure}

    The intersection lattice of the hyperplane arrangement defined by $f$ contains three types objects: planes~$\mathcal{H}$, lines $\mathcal{\ell}$ which are the intersection of two of the planes and are not contained in any of the other planes, and the origin $\mathbf{0}$. The possible values of $\codim(\mathcal{H})/s(\mathcal{H})$ for the four planes are
    \[
    1 \text{ and } \frac{1}{2},
    \]
    where the values $1$ come from the planes with multiplicity $2$, that is $y^2 = 0$ and $z^2 = 0$. For the origin, we have 
    \[
    \frac{\codim(\mathbf{0})}{s(\mathbf{0})} = \frac{3}{6} = \frac{1}{2}.
    \]
    Finally, the values of $\codim(\ell)/s(\ell)$, as $\ell$ varies through all the lines, are
    \[
    \frac{2}{4} = \frac{1}{2}, \frac{2}{3}, \text{ and } \frac{2}{2}= 1.
    \]
    The minimum of all these values is the real log canonical threshold $\lambda_\R = 1/2$, by Corollary~\ref{corollary:rlct}. Among the lines, the intersection of the two planes of multiplicity $2$ attains the minimum value of $1/2$. Additionally, each of these two planes and the origin attain this value. So, we have a chain $\mathbf{0} \subseteq \ell \subseteq \mathcal{H}$ of (three) elements $W$ in the intersection lattice with $\codim(W)/s(W) = \lambda_\R$. This is the maximal possible length of such a sequence and hence $m_\R = 3$. In total,
    \[
    \rlct(f) = (1/2,3)
    \]
    and so, for small $\varepsilon > 0$, the integral of Equation (\ref{equation:integral}) is asymptotically $C\varepsilon^{1/2}(-\ln \varepsilon)^2$, for some constant $C$, see Figure~\ref{fig:3D-surface}.
\end{example}

The following is an easy example in two-dimensional affine space where $\lambda_\C \neq \lambda_\R$ and $m_\C \neq m_\R$.

\begin{example}
    Let $f(x,y) = xy(y^2+1)^2$ whose real locus is the union of two lines. As the factor $y^2 +1$ has empty real locus, we can use~\cite[Theorem 7.1]{tubes} which shows that the real log canonical threshold is given by $\rlct(f) = (1,2)$. 
    
    Over $\C$, the function factors as $xy(y - \sqrt{-1})^2(y+\sqrt{-1})^2$ which is locally the union of (at most) two not necessarily reduced lines. We compute the $\lct$ locally at each point and take the minimum which is possible by~\ref{remark:local-rlct}. At points $w$ just lying on one of the lines, we either get $\lct_w(f) = (1,1)$ or $(1/2,1)$ depending on whether the line is reduced or not. In the point $w = (0,0)$, the pair $\lct$ is $(1,2)$, just as in the real case. Finally, if $w = (0,\pm \sqrt{-1})$ then again $\lct_w(f) = (1/2,1)$. Summing this up by taking the minimum of all these pairs with respect to the total order from~\ref{remark:local-rlct}, we infer that $\lct(f) = (1/2,1)$.
    
\end{example}

\begin{example}\label{example:3D-complex-real-equal}
    We consider the function $f(x,y,z) = x^2 + y^2 + z^2$. It defines an irreducible surface in $\C^3$. Its real locus is just an isolated reduced point which cannot lie Zariski dense in the surface. We will show that still $\rlct(f) = \lct(f)$.
    To compute its log canonical threshold over $\R$ and over $\C$, we use a blow up at the origin which can be covered by three open affine charts who are symmetric with respect to the three coordinates $x,y,z$. Therefore, it suffices to consider the blow up just on one chart, for instance,
    \begin{align*}
        \rho: \C^3 &\to \C^3\\
                (x,y,z) &\mapsto (x,xy,xz).
    \end{align*}
    The Jacobian determinant of $\rho$ is $\det \text{Jac}(\rho)(x,y,z) = x^2$ and the pullback of $f$ is $(f \circ \rho)(x,y,z) = x^2 (1 + y^2 + z^2)$. As $1+y^2+z^2$ has no zeros over $\R$, we can use~\cite[Theorem 7.1]{tubes} and infer that
    \[
    \rlct(f) = (3/2,1).
    \]
    The factor $1+y^2+z^2$ defines a non-singular irreducible surface in three-dimensional complex affine space. Its tangent space, which is the kernel of its Jacobian matrix $[0 \ 2y \ 2z]$, contains first standard basis vector of $\C^3$ for arbitrary choices of $y$ and $z$. But this vector is not contained in the surface defined by the first factor $x^2$ of $f \circ \rho$. So, reminding ourselves that $\det \text{Jac}(\rho)(x,y,z) = x^2$ and using that the complex scheme defined by $f\circ \rho$ is locally isomorphic to the one defined by $x^2 y$, we infer that also
    \[
    \lct(f) = (3/2,1).
    \]
    
\end{example}

\section{SageMath code for the RLCT of a hyperplane arrangement}\label{section:code}

The following is a SageMath function~\cite{sagemath} that takes as an input an $(n \times d)$-matrix \texttt{A} whose rows are the vectors of (complex) coefficients of linear forms $L_1,\ldots,L_n$ in $d$ variables and a row vector \texttt{s} whose entries are positive integers $s_1,\ldots,s_n$. The output is the pair $\lct(f) = (\lambda_\C,m_\C)$ of the function $f = L_1^{s_1} \cdots L_n^{s_n}$. This coincides with the pair $\rlct(f) = (\lambda,m)$ in case that all the entries of \texttt{A} are real numbers. 
\newpage
\begin{lstlisting}[language=Python, caption={SageMath code example}]
def compute_lct(A, s):
    from itertools import combinations
    
    # Check if the dimension of s equals the number of rows of A
    if len(s) != A.nrows():
        raise ValueError("The dimension of s should equal the number of rows of A.")

    d = A.ncols()  # Dimension of the ambient space
    n = A.nrows()  # Number of hyperplanes
    def codim(W):
        return d - W.dimension()
    def s_function(W):
        # Compute sum of s_j for all hyperplanes H_j containing W
        return sum(s[j] for j in range(n)
                   if all(vector(A[j, :]).dot_product(v) == 0
                          for v in W.basis()))

    # Calculate all intersections
    intersections = []
    for r in reversed(range(1, n + 1)):
        for comb in combinations(range(n), r):
            submatrix = A[list(comb), :]
            intersection = submatrix.right_kernel()
            intersections.append(intersection)
    
    # Remove duplicate intersections
    L = list(uniq(intersections))
    
    # Compute lambda as the minimum of codim(W) / s_function(W)
    lambda_min = min(codim(W) / s_function(W) for W in L)
    
    # Filter intersections where lambda achieves its minimum
    min_lambda_intersections = [W for W in L
                                if codim(W) / s_function(W) == lambda_min]
    
    # Calculate the maximum length of a chain of intersections
    def longest_chain_length(L):
        if not L:
            return 0
        
        lengths = [1] * len(L)
        
        for i in range(len(L)):
            for j in range(i):
                if all(v in L[i] for v in L[j].basis()):
                    lengths[i] = max(lengths[i], lengths[j] + 1)
        
        return max(lengths)
    m = longest_chain_length(min_lambda_intersections)
    return lambda_min, m
\end{lstlisting}

\vspace{1cm}

The code is also available on \href{https://github.com/DanielWindisch/compute-lct}{https://github.com/DanielWindisch/compute-lct}. 
To use SageMath online, visit \href{https://sagecell.sagemath.org/}{SageMathCell}.
We can test the SageMath command on the function from Example~\ref{example:four-planes} as described below.

\vspace{0.5cm}
\begin{verbatim}
A = Matrix([[1,0,0],[0,1,0],[0,0,1],[1,1,1]])
s = vector([1,2,2,1])

lambda_value, m_value = compute_lct(A, s)
lambda_value, m_value
\end{verbatim}
\vspace{0.5cm}

 It gives back the correct pair $(1/2,3)$ within an average time of 24.4 milliseconds on a hp Probook 450 G6 with intel Core i5-8265U @ 1.6GHz, 4 cores / 8 threads, and Windows 11. The computation of just the log canonical threshold with the command \texttt{lct} in the \textit{Macaulay2} package~\cite{M2} BernsteinSato takes 4.67 seconds in the same setup.
 Moreover, in the same setup, tests with random choices of coefficients and exponents in the linear forms defining the hyperplane arrangements show that our code is able to perform computations of the rlct for arrangements of up to $n = 13$ hyperplanes in an affine space of dimension at most $d = 9$ within two minutes. Reducing to $d = 3$, we can raise the number of hyperplanes to $n = 15$.

 Our code can be easily extended to hyperplane arrangements that are not necessarily central, see Remark~\ref{remark:local-rlct}. The computations of the rlct of the central parts of a given hyperplane arrangement could even be parallelized.

\section*{Acknowledgements}
We would like to thank Bernd Sturmfels for suggesting the problem of determining the rlct of hyperplane arrangements to us. We are also grateful to Lorenzo Baldi, Daniel L. Bath, and Zach Teitler for helpful discussions on the topic. D. Kosta gratefully acknowledges funding from the Royal Society Dorothy Hodgkin Research Fellowship DHF$\backslash$R1$\backslash$201246. We would like to thank the associated Royal Society Enhancement grant RF$\backslash$ERE$\backslash$231053  that supported  D. Windisch's postdoctoral research.  A great thanks goes to the two anonymous referees whose comments and suggestions substantially improved the manuscript.

\bibliographystyle{amsplainurl}
\bibliography{bibliography}
 
\vspace{1cm} 
 
\noindent
\textsc{Dimitra Kosta, School of Mathematics, University of Edinburgh, James Clerk Maxwell Building, Peter Guthrie Tait Road, EH9 3FD,  Edinburgh, United Kingdom} \\
\textit{E-mail address}: \texttt{D.Kosta@ed.ac.uk}\\ 
 
\noindent
\textsc{Daniel Windisch, Department of Computer Science, KU Leuven, Celestijnenlaan 120A, Leuven, Belgium} \\
\textit{E-mail address}: \texttt{daniel.windisch.math@gmail.com}

\end{document}